\numberwithin{equation}{section}
\newtheorem{thm}{Theorem}[section]
\newtheorem{prop}[thm]{Proposition}
\newtheorem{lem}[thm]{Lemma}
\theoremstyle{definition}
\newtheorem{defn}[thm]{Definition}
\theoremstyle{remark}
\newtheorem{rem}[thm]{Remark}
\renewcommand{\ker}{\operatorname{Ker}}
\newcommand{\Z}{\mathbb{Z}}
\newcommand{\Q}{\mathbb{Q}}
\newcommand{\R}{\mathbb{R}}
\newcommand{\K}{\mathbb{K}}
\DeclareMathOperator{\im}{Im}
\DeclareMathOperator{\tr}{tr}
\begin{document}

\title[Torsion and Morse-Novikov theory]{Non-commutative Reidemeister torsion and Morse-Novikov theory}
\author[T.~Kitayama]{Takahiro KITAYAMA}
\address{Graduate~School~of~Mathematical~Sciences, the~University~of~Tokyo, 3-8-1~Komaba, Meguro-ku, Tokyo 153-8914, Japan}
\email{kitayama@ms.u-tokyo.ac.jp}
\subjclass[2000]{Primary~57Q10, Secondary~57R70}
\keywords{Reidemeister torsion, Morse-Novikov complex, derived series}

\begin{abstract}
Given a circle-valued Morse function of a closed oriented manifold, we prove that Reidemeister torsion over a non-commutative formal Laurent polynomial ring equals the product of a certain non-commutative Lefschetz-type zeta function and the algebraic torsion of the Novikov complex over the ring.
This paper gives a generalization of the result of Hutchings and Lee on abelian coefficients to the case of skew fields.
As a consequence we obtain a Morse theoretical and dynamical description of the higher-order Reidemeister torsion.
\end{abstract}

\maketitle

\section{Introduction}

In this paper let $X$ be a closed connected oriented Riemannian $d$-manifold with $\chi(X) = 0$ and $f \colon X \to S^1$ a Morse function such that the stable and unstable manifolds of the critical points of $f$ transversely intersect and the closed orbits of flows of $\nabla f$ are all nondegenerate.
(See Section \ref{sec_2_2} and \ref{sec_3_1}.)

For a generic closed $1$-form, for instance $df$, we can define the Lefschetz-type zeta function which counts closed orbits of flows induced by the $1$-form.
In \cite{Hu}, \cite{HL1}, \cite{HL2} Hutchings and Lee showed that the product of the zeta function and the algebraic torsion of the abelian Novikov complex associated to the $1$-form is a topological invariant and is equal to the abelian Reidemeister torsion of $X$.
In \cite{Paz2}, \cite{Paz3} Pazhitnov also proved a similar theorem in terms of the torsion of a canonical chain homotopy equivalence map between the abelian Novikov complex and the completed simplicial chain complex of the maximal abelian covering of $X$.
In the case where $X$ is a fiber bundle over a circle and $f$ is the projection these results give Milnor's theorem in \cite{M2}, which claims that the Lefschetz zeta function of a self map is equal to the abelian Reidemeister torsion of the mapping torus of the map.

In fixed point theory there is a non-commutative substitute for the Lefschetz zeta function which is called the total Lefschetz-Nielsen invariant, and in \cite{GN} Geoghegan and Nicas showed that the invariant has similar properties to these of torsion and determines the Reidemeister traces of iterates of a self map.
In \cite{Paj1} Pajitnov considered the eta function associated to $- \nabla f$ which lies in a suitable quotient of the Novikov ring of $\pi_1 X$ and whose abelianization coincides with the logarithm of the Lefschetz-type zeta function.
He also proved a formula expressing the eta function in terms of the torsion of a chain homotopy equivalence map between the Novikov complex and the completed simplicial chain complex of the universal covering of $X$.
These works were generalized to the case of generic closed $1$-forms by Sch\"utz in \cite{Sc1} and \cite{Sc2}. 

Non-commutative Alexander polynomials which are called the higher-order Alexander polynomials were introduced, in particular for $3$-manifolds, by Cochran in \cite{C} and Harvey in \cite{Ha}, and are known by Friedl in \cite{F} to be essentially equal to Reidemeister torsion over certain skew fields.
We call it higher-order Reidemeister torsion.
The aim of this paper is to give a generalization of Hutchings and Lee's theorem to the case where the coefficients are skew fields by using Dieudonn\'{e} determinant and to obtain a Morse theoretical and dynamical description of higher-order Reidemeister torsion.
Note that it is known by Goda and Pajitnov in \cite{GP} that the torsion of a chain homotopy equivalence between the twisted Novikov complex and the twisted simplicial complex by a linear representation equals the twisted Lefschetz zeta function which was introduced by Jiang and Wang in \cite{JW}.
This work is closely related to twisted Alexander polynomials which were introduced first by Lin in \cite{L} and later generally by Wada in \cite{W}. 
Our objects and approach considered here are different from theirs.

Let $\Lambda_f$ be the Novikov completion of $\Z[\pi_1 X]$ associated to  $f_* \colon \pi_1 X \to \pi_1 S^1$.
We first consider a certain quotient group $\overline{(\Lambda_f^{\times})}_{ab}$ of the abelianization of the unit group $\Lambda_f^{\times}$ and introduce non-commutative Lefschetz-type zeta function $\zeta_f \in \overline{(\Lambda_f^{\times})}_{ab}$ of $f$.
Taking a  poly-torsion-free-abelian group $G$ and group  homomorphisms $\rho \colon \pi_1 X \to G$, $\alpha \colon G \to \pi_1 S^1$ such that $\alpha \circ \rho = f_*$,  we construct a certain Novikov-type skew field $\mathcal{K}_{\theta}((t^l))$.
Similar to $\overline{(\Lambda_f^{\times})}_{ab}$ we define a certain quotient group $\overline{\mathcal{K}_{\theta}((t^l))}_{ab}^{\times}$ of the abelianization $\mathcal{K}_{\theta}((t^l))_{ab}^{\times}$ of $\mathcal{K}_{\theta}((t^l))^{\times}$.
We can check that $\rho$ naturally extends to a ring homomorphism $\Lambda_f \to \mathcal{K}_{\theta}((t^l))$ and also denote it by $\rho$.
There is a naturally induced homomorphism $\rho_* \colon \overline{(\Lambda_f^{\times})}_{ab} \to \overline{\mathcal{K}_{\theta}((t^l))}_{ab}^{\times}$ by $\rho$.
If the twisted homology group $H_*^{\rho}(X ; \mathcal{K}_{\theta}((t^l)))$ of $X$ associated to $\rho$ vanishes, then we can define the Reidemeister torsion $\tau_{\rho}(X)$ of $X$ associated to $\rho$ and the algebraic torsion $\tau_{\rho}^{Nov}(f)$ of the Novikov complex over $\mathcal{K}_{\theta}((t^l))$ as elements in $\mathcal{K}_{\theta}((t^l))_{ab}^{\times} / \pm \rho(\pi_1 X)$.
Here is the main theorem which can be applied for the higher-order Reidemeister torsion.

\begin{thm}[Theorem \ref{thm_M}]
For a given pair $(\rho, \alpha)$ as above, if $H_*^{\rho}(X ; \mathcal{K}_{\theta}((t^l))) = 0$, then
\[ \tau_{\rho}(X) = \rho_*(\zeta_f) \tau_{\rho}^{Nov}(f) \in \overline{\mathcal{K}_{\theta}((t^l))}_{ab}^{\times} / \pm \rho(\pi_1 X). \]
\end{thm}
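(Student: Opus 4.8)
The plan is to reduce the non-commutative statement to its abelian counterpart—Hutchings–Lee's theorem, together with Pajitnov/Sch\"utz's refinement producing a chain homotopy equivalence—by means of the ring homomorphism $\rho \colon \Lambda_f \to \mathcal{K}_{\theta}((t^l))$, and then to run a torsion-of-a-triangle argument over the skew field $\mathcal{K}_{\theta}((t^l))$. First I would recall that the choice of a lift of $f$ and of a Riemannian metric in general position produces the Novikov complex $C_*^{Nov}(\widetilde f)$ over $\Lambda_f$, together with a canonical chain homotopy equivalence $\varphi \colon C_*^{Nov}(\widetilde f) \xrightarrow{\simeq} C_*(\widetilde X) \otimes_{\Z[\pi_1 X]} \Lambda_f$ to the completed simplicial chain complex of the universal cover; this is exactly the map whose torsion Pajitnov and Sch\"utz compute, and by their formula (the non-commutative eta-function formula) the torsion of $\varphi$, after passing to $\overline{(\Lambda_f^\times)}_{ab}$, is precisely $\zeta_f^{\pm 1}$. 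Applying the induced ring map $\rho$, I get a chain homotopy equivalence $\varphi_\rho$ of complexes over $\mathcal{K}_{\theta}((t^l))$ whose torsion is $\rho_*(\zeta_f)$ in $\overline{\mathcal{K}_{\theta}((t^l))}_{ab}^{\times}/\pm\rho(\pi_1 X)$; here one uses that $\rho$ is compatible with the completions and that the Dieudonn\'e-determinant torsion is functorial under ring homomorphisms into skew fields.

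Next, under the hypothesis $H_*^\rho(X;\mathcal{K}_{\theta}((t^l))) = 0$, the complex $C_*(\widetilde X)\otimes_{\Z[\pi_1 X]}\mathcal{K}_{\theta}((t^l))$ is acyclic, hence so is $C_*^{Nov}(\widetilde f)\otimes_{\Lambda_f}\mathcal{K}_{\theta}((t^l))$. I would then invoke the multiplicativity of Reidemeister torsion for a short exact sequence of acyclic complexes, or equivalently the composition formula $\tau(g\circ h) = \tau(g)\,\tau(h)$ and the relation between the torsion of an acyclic complex and the torsion of a chain equivalence to the zero complex: concretely, $\tau_\rho(X) = \tau(\varphi_\rho)\cdot\tau_\rho^{Nov}(f)$, where $\tau_\rho(X)$ is the torsion of $C_*(\widetilde X)\otimes\mathcal{K}_{\theta}((t^l))$ and $\tau_\rho^{Nov}(f)$ that of the Novikov complex, both with respect to the preferred (geometric) bases. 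Substituting $\tau(\varphi_\rho) = \rho_*(\zeta_f)$ gives the claimed identity. Care is needed that the indeterminacy is exactly $\pm\rho(\pi_1 X)$: the sign ambiguity absorbs reorderings of cells and the orientation choices, while the $\rho(\pi_1 X)$ ambiguity comes from the choice of lifts of cells and of critical points; these choices are the same on both sides, so they cancel up to this subgroup, which is why the statement lives in the quotient.

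The main obstacle, and the part requiring genuine care rather than bookkeeping, is the functoriality and well-definedness of the torsion invariants under $\rho$ at the level of the \emph{completed} rings. One must check: (i) that $\rho$ indeed extends continuously $\Lambda_f \to \mathcal{K}_{\theta}((t^l))$, i.e.\ that the Novikov completion maps into the skew-field-of-Laurent-type completion—this uses that $G$ is poly-torsion-free-abelian so that $\Z[G]$ embeds in a skew field and the grading by $\alpha$ is compatible with $f_*$; (ii) that the chain homotopy equivalence $\varphi$ and its torsion survive base change, which needs the torsion of $\varphi$ to be defined as an element of $K_1$ of $\Lambda_f$ (not merely of a completion where $K_1$ might collapse) and then mapped forward—here the passage to the quotient $\overline{(\Lambda_f^\times)}_{ab}$ is what makes $\zeta_f$ a well-defined target for $\tau(\varphi)$; and (iii) that the Dieudonn\'e determinant is multiplicative on the relevant subgroup so that the triangle identity holds after abelianization. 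Once these compatibilities are in place, the proof is the formal torsion-of-a-triangle computation sketched above, and the geometric input is entirely imported from the abelian/linear-representation cases via $\rho$.
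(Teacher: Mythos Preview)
Your approach is correct and, as the paper itself remarks in the introduction, the theorem can indeed be deduced from Pajitnov's results by a purely algebraic functoriality argument---precisely the route you sketch. The paper's own proof, however, is entirely different: it adapts Hutchings--Lee's direct method to the non-commutative setting. Concretely, the authors build an auxiliary CW complex $X'$ adapted to the gradient flow (via cells of the form $\widehat{\mathcal{D}_0(p)}$, $\sigma$, $\widehat{\mathcal{F}(\sigma)}$), prove $\tau_\rho(X)=\tau_\rho(X')$ by a Mayer--Vietoris/multiplicativity argument, and then compute $\tau_\rho(X')$ explicitly from a block decomposition of the boundary operator that separates a ``Novikov'' block $K_i$ from a ``return map'' block $I-\phi_{i-1}$. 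Identifying $\prod_i[\det(I-\phi_{i-1})]^{(-1)^i}$ with $\rho_*(\zeta_f)$ requires a delicate non-commutative approximation argument (the $\sim_k$ relation and a combinatorial lemma on determinants of matrices with controlled cycle structure), which is the technical heart of the paper.

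Your route is shorter and more conceptual, importing the geometric input wholesale from Pajitnov and Sch\"utz; its cost is that you must still verify that the torsion of the chain equivalence, expressed via the eta function in a quotient of $\Lambda_f^+$, reduces after passing to $\overline{(\Lambda_f^\times)}_{ab}$ to the product formula defining $\zeta_f$---a step the paper asserts but does not spell out, and which is not entirely formal since $\exp$ of a sum in a non-commutative ring is not a product of exponentials. The paper's direct computation, by contrast, is self-contained and makes explicit exactly where the non-commutativity forces one to work in the coarser quotient $\overline{(\Lambda_f^\times)}_{ab}$ rather than in $(\Lambda_f^\times)_{ab}$ itself.
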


To prove the theorem we use a similar approach to that of Hutchings and Lee in \cite{HL2}, but we need more subtle argument because of the non-commutative nature, especially in the second half, which is the heart of the proof.
We can check that the non-commutative zeta function $\zeta_f$ can be seen as a certain reduction of the eta function associated to $- \nabla f$, and this theorem can also be deduced from the results of Pajitnov in \cite{Paj1} by a purely algebraic functoriality argument.
In \cite[Theorem 5.4]{GS} Goda and Sakasai showed another splitting formula for Reidemeister torsion over skew fields.

This paper is organized as follows.
In the next section we review some of the standard facts of Reidemeister torsion and the Novikov complex of $f$.
In Section \ref{sec_3} we introduce the non-commutative Lefschetz-type zeta function $\zeta_f$ and construct the skew field $\mathcal{K}_{\theta}((t^l))$.
There we also set up notation for higher-order Reidemeister torsion.
Section \ref{sec_4} is devoted to the proof of the main theorem.

\section{Preliminaries}
\subsection{Reidemeister torsion}
We begin with the definition of Reidemeister torsion over a skew field $\K$.
See \cite{M1} and \cite{T1} for more details.

For a matrix over $\K$, we mean by an elementary row operation the addition of a left multiple of one row to another row.
After elementary row operations we can turn any matrix $A \in GL_k(\K)$ into a diagonal matrix $(d_{i, j})$.
Then the \textit{Dieudonn\'{e} determinant} $\det A$ is defined to be $[\prod_{i=1}^k d_{i, i}] \in \K_{ab}^{\times} := \K^{\times} / [\K^{\times}, \K^{\times}]$.

Let $C_* = (C_n \xrightarrow{\partial_n} C_{n-1} \to \cdots \to C_0)$ be a chain complex of finite dimensional right $\K$-vector spaces.
If we have bases $b_i$ of $\im \partial_{i+1}$ and $h_i$ of $H_i(C_*)$ for $i = 0, 1, \dots n$, we can take a basis $b_i h_i b_{i-1}$ of $C_i$ as follows.
Picking a lift of $h_i$ in $\ker \partial_i$ and combining it with $b_i$, we first obtain a basis $b_i h_i$ of $C_i$.
Then picking a lift of $b_{i-1}$ in $C_i$ and combining it with $b_i h_i$, we can obtain a basis $b_i h_i b_{i-1}$ of $C_i$.

\begin{defn}
For given bases $\boldsymbol{c} = \{ c_i \}$ of $C_*$ and $\boldsymbol{h} = \{ h_i \}$ of $H_*(C_*)$, we choose a basis $\{ b_i \}$ of $\im \partial_*$ and define
\[ \tau(C_*, \boldsymbol{c}, \boldsymbol{h}) := \prod_{i=0}^n [b_i h_i b_{i-1} / c_i]^{(-1)^{i+1}} ~ \in \K_{ab}^{\times}, \]
where $[b_i h_i b_{i-1} / c_i]$ is the Dieudonn\'{e} determinant of the base change matrix from $c_i$ to $b_i h_i b_{i-1}$.
If $C_*$ is acyclic, then we write $\tau(C_*, \boldsymbol{c})$.
\end{defn}

It can be easily checked that $\tau(C_*, \boldsymbol{c}, \boldsymbol{h})$ does not depend on the choices of $b_i$ and $b_i h_i b_{i-1}$.

Torsion has the following multiplicative property.
Let
\[ 0 \to C_*' \to C_* \to C_*'' \to 0 \]
be a short exact sequence of finite chain complexes of finite dimensional right $\K$-vector spaces and let $\boldsymbol{c} = \{ c_i \}, \boldsymbol{c}' = \{ c_i' \}, \boldsymbol{c}'' = \{ c_i'' \}$ and $\boldsymbol{h} = \{ h_i \}, \boldsymbol{h}' = \{ h_i' \}, \boldsymbol{h}'' = \{ h_i'' \}$ be bases of $C_*, C_*', C_*''$ and $H_*(C_*), H_*(C_*'), H_*(C_*'')$.
Picking a lift of $c_i''$ in $C_i$ and combining it with the image of $c_i'$ in $C_i$, we obtain a basis $c_i' c_i''$ of $C_i$.
We denote by $\mathcal{H}_*$ the corresponding long exact sequence in homology and by $\boldsymbol{d}$ the basis of $\mathcal{H}_*$ obtained by combining $\boldsymbol{h}, \boldsymbol{h}', \boldsymbol{h}''$.

\begin{lem}(\cite[Theorem 3.\ 1]{M1}) \label{lem_M}
If $[c_i' c_i'' / c_i] = 1$ for all $i$, then
\[ \tau(C_*, \boldsymbol{c}, \boldsymbol{h}) = \tau(C_*', \boldsymbol{c}', \boldsymbol{h}') \tau(C_*'', \boldsymbol{c}'', \boldsymbol{h}'') \tau(\mathcal{H}_*, \boldsymbol{d}). \]
\end{lem}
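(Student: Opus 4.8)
The plan is to follow the proof of the commutative case, \cite[Theorem~3.1]{M1}, and to check that every determinant manipulation in it survives the passage to the skew field $\K$; note that the target $\K_{ab}^{\times}$ is abelian, so no question of ordering of factors arises there. The facts about the Dieudonn\'{e} determinant that I would first isolate are: (i) multiplicativity, $\det(AB)=\det(A)\det(B)$ in $\K_{ab}^{\times}$ (Dieudonn\'{e}'s theorem); (ii) $\det\bigl(\begin{smallmatrix}A&B\\0&C\end{smallmatrix}\bigr)=\det(A)\det(C)$ for invertible square blocks $A,C$; and (iii) the Dieudonn\'{e} determinant of the block-permutation matrix interchanging a size-$p$ block with a size-$q$ block equals $[(-1)^{pq}]\in\K_{ab}^{\times}$, i.e.\ exactly the sign that occurs in the commutative computation. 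Item (ii) follows by clearing $B$ with elementary row operations (subtracting suitable left multiples of the rows passing through $C$) and then diagonalizing $A$ and $C$ separately; item (iii) follows from the reduction of $\bigl(\begin{smallmatrix}0&1\\1&0\end{smallmatrix}\bigr)$ to $\operatorname{diag}(1,-1)$ together with (i) and the centrality of $-1$.

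With these in hand the argument splits in two. \emph{Step 1 (the acyclic case).} Assume $C_*',C_*,C_*''$ are all acyclic, so $\mathcal{H}_*=0$ and one must show $\tau(C_*,\boldsymbol{c})=\tau(C_*',\boldsymbol{c}')\,\tau(C_*'',\boldsymbol{c}'')$. For each $i$ I would choose a basis $b_i'$ of $\im\partial_{i+1}'=\ker\partial_i'$ and a basis $b_i''$ of $\im\partial_{i+1}''=\ker\partial_i''$; verify by a short diagram chase — using the acyclicity of $C_*'$ for surjectivity on the right — that $0\to\ker\partial_i'\to\ker\partial_i\to\ker\partial_i''\to0$ is exact; lift $b_i''$ into $\ker\partial_i$ to extend $b_i'$ to a basis $b_i$ of $\ker\partial_i=\im\partial_{i+1}$; and choose lifts of $b_{i-1}'$ inside $C_i'$ and of $b_{i-1}''$ inside $C_i$. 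Then $b_ib_{i-1}$, $b_i'b_{i-1}'$, $b_i''b_{i-1}''$ are bases of $C_i$, $C_i'$, $C_i''$ computing the three torsions. Comparing $b_ib_{i-1}$ with the stacked basis $c_i'c_i''$, reordering the blocks of $b_ib_{i-1}$ into $(b_i'b_{i-1}')(b_i''b_{i-1}'')$ contributes the factor from (iii); the base-change matrix from $c_i'c_i''$ to $(b_i'b_{i-1}')(b_i''b_{i-1}'')$ is block upper-triangular by the way the lifts were chosen, so by (ii) its Dieudonn\'{e} determinant is $[b_i'b_{i-1}'/c_i']\,[b_i''b_{i-1}''/c_i'']$; and $[c_i'c_i''/c_i]=1$ is the hypothesis. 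Forming the alternating product over $i$, the residual signs telescope to $1$ as in the commutative case, which proves Step~1.

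\emph{Step 2 (reduction of the general case).} This step is formal, and I would reproduce the argument of \cite{M1}: one adjoins to each of $C_*',C_*,C_*''$ an acyclic complex manufactured from its homology, making all three acyclic without changing the torsions $\tau(-,\boldsymbol{c},\boldsymbol{h})$; these acyclifications can be arranged to be compatible with the given short exact sequence up to a correction term, and identifying that term with $\tau(\mathcal{H}_*,\boldsymbol{d})$ by means of the long exact homology sequence and its connecting homomorphisms yields the asserted formula after applying Step~1. No determinant identity beyond (i)--(iii) is needed here, and the sign bookkeeping is insensitive to non-commutativity.

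The place I expect to require the most care is Step~1: checking that the base-change matrix from $c_i'c_i''$ to $(b_i'b_{i-1}')(b_i''b_{i-1}'')$ is genuinely block upper-triangular — this is exactly what forces the lifts of $b_{i-1}'$ to be taken inside $C_i'$ and of $b_{i-1}''$ into $C_i$ — and confirming that the residual factors $[(-1)^{pq}]$, which are now honest elements of $\K_{ab}^{\times}$ rather than units of a commutative ring, cancel in the alternating product. Neither point is deep, but it is precisely here that the non-commutative setting could in principle interfere, so I would carry it out in full.
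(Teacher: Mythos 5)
The paper gives no proof of this lemma; it is quoted directly from Milnor's Theorem 3.1, the implicit point being exactly the one you make explicit: Milnor's argument uses only multiplicativity of the determinant, the block-triangular determinant formula, and sign bookkeeping with $-1$ central, all of which carry over verbatim to the Dieudonn\'{e} determinant of a skew field. Your properties (i)--(iii) and your Step 1 are the correct verification of this and match the intended reading of the citation.

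One caution about Step 2 as you have phrased it: ``adjoining to $C_*$ an acyclic complex manufactured from its homology'' does not make $C_*$ acyclic --- taking a direct sum with an acyclic complex leaves $H_*(C_*)$ unchanged --- so the reduction as literally described would stall. This is also not how Milnor's proof of Theorem 3.1 actually proceeds: he proves the general formula directly by choosing compatible bases for $\im\partial_*$, for $H_*$, and for the terms of $\mathcal{H}_*$ (the images of the three maps in the long exact sequence), and then tracking the resulting base-change determinants, exactly the kind of computation you carry out in Step 1; the acyclic case is the specialization $\mathcal{H}_* = 0$, not a reduction target. Since you explicitly say you would ``reproduce the argument of [M1]'' and since the determinant identities you isolate are precisely the ones Milnor's direct argument needs, this is a misdescription of the reference rather than a gap in the intended proof, but it is worth flagging because the construction you name would not do what you want it to do.
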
 

The following lemma is a certain non-commutative version of \cite[Theorem 2.2]{T1}.
Turaev's proof can be easily applied to this setting.

\begin{lem} \label{lem_T}
If $C_*$ is acyclic and we find a decomposition $C_* = C_*' \oplus C_*''$ such that $C_i'$ and $C_i''$ are spanned by subbases of $c_i$ and the induced map $pr_{C_{i-1}''} \circ \partial_i |_{C_i'} \colon C_i' \to C_{i-1}''$ is an isomorphism for each $i$, then
\[ \tau(C_*, \boldsymbol{c}) = \pm \prod_{i=0}^n (\det pr_{C_{i-1}''} \circ \partial|_{C_i'})^{(-1)^i}. \]
\end{lem}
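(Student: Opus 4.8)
The plan is to reduce the computation to the multiplicativity lemma (Lemma \ref{lem_M}) applied to the natural short exact sequence $0 \to C_*'' \to C_* \to C_*/C_*'' \to 0$, where I first observe that since $C_i = C_i' \oplus C_i''$ with both summands spanned by subbases of $c_i$, the quotient $C_*/C_*''$ is canonically identified (as a based complex) with $C_*'$ equipped with the basis inherited from $c_i$; call the resulting quotient differential $\bar\partial$. Under the identification $C_i/C_{i-1}'' \cong C_{i-1}'$... more precisely $C_{i-1}/C_{i-1}'' \cong C_{i-1}'$, the hypothesis that $pr_{C_{i-1}''}\circ\partial_i|_{C_i'}$ is an isomorphism is exactly the statement that $\bar\partial_i \colon C_i' \to C_{i-1}'$ is \emph{zero}: the component of $\partial_i$ landing in $C_{i-1}'$, read modulo $C_{i-1}''$, is... wait, one must be careful — the correct observation is rather that $C_*''$ together with the graph of $\partial|_{C_*'}$ organizes $C_*$ into acyclic two-term pieces. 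So the cleaner route is a direct filtration argument rather than one global short exact sequence.

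Concretely, I would argue as follows. The compatibility of bases $[c_i' c_i''/c_i] = 1$ (which holds since $c_i$ is just the concatenation of the subbases spanning $C_i'$ and $C_i''$, up to the sign absorbed in the $\pm$) lets us compute $\tau(C_*,\boldsymbol c)$ from the based decomposition directly. Consider the "staircase" automorphism of the underlying graded vector space: on $C_i'$ send $x \mapsto \partial_i x$, which by hypothesis maps $C_i'$ isomorphically onto a complement of... this exhibits, for each $i$, an acyclic based subcomplex $D_*^{(i)} = (0 \to C_i' \xrightarrow{pr\circ\partial_i} C_{i-1}'' \to 0)$ concentrated in degrees $i, i-1$ after a change of basis of $C_*$ that is upper-triangular (hence has Dieudonné determinant $\pm 1$ on each $C_i$ once we reduce modulo $\pm\rho(\pi_1 X)$-type ambiguities; here simply $\pm 1$). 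The base-change matrix from $c_i$ to the "graph basis" $\{x,\partial_i x : x \in C_i'\} \cup \{c_i''\text{-part}\}$ is block upper triangular with identity diagonal blocks in the appropriate ordering, so its Dieudonné determinant is $1$. Then $C_*$ becomes an orthogonal sum $\bigoplus_i D_*^{(i)}$ of elementary acyclic complexes, and by the multiplicativity of torsion over direct sums (a special case of Lemma \ref{lem_M} with all homology vanishing, so $\tau(\mathcal H_*) = 1$) we get $\tau(C_*,\boldsymbol c) = \pm \prod_i \tau(D_*^{(i)})$. Finally $\tau(D_*^{(i)}) = (\det pr_{C_{i-1}''}\circ\partial_i|_{C_i'})^{(-1)^i}$ straight from the definition of torsion for a two-term acyclic complex in degrees $i, i-1$.

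The main obstacle is bookkeeping in the non-commutative setting: one must verify that the "graph" change of basis really does have Dieudonné determinant exactly $1$ (not merely up to sign or up to a commutator), which requires checking that the relevant transition matrices are genuinely unipotent upper-triangular when the degrees are listed in decreasing order, and that the iterated splitting into the $D_*^{(i)}$ is compatible with the ordering conventions in the definition of $\tau$ — in particular that the lifts $b_i h_i b_{i-1}$ used there match the graph bases. Since all homology groups vanish, there are no $h_i$ to track, and Turaev's argument in \cite[Theorem 2.2]{T1} goes through verbatim once one replaces ordinary determinants by Dieudonné determinants and is careful that elementary row operations (the only moves allowed in computing $\det$) suffice to diagonalize the unipotent transition matrices; this is exactly the point where the hypothesis "spanned by subbases of $c_i$" is used, guaranteeing no nontrivial permutation or scaling enters. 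The sign $\pm$ absorbs the reordering of basis elements, which is the one place genuine determinant-of-a-permutation data (values in $\{\pm 1\}$) can appear.
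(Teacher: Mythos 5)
Your plan of splitting $C_*$ into elementary two-term acyclic pieces and invoking multiplicativity is a legitimate route, and is close in spirit to what the paper has in mind (the paper simply cites Turaev \cite[Theorem 2.2]{T1}, whose argument sets $b_{i-1}:=\partial_i(c_i')$ and computes the factors $[b_i b_{i-1}/c_i]$ directly rather than splitting $C_*$). But your non-commutative bookkeeping goes wrong at exactly the step you yourself flag as the main obstacle. As a minor point, $\{x,\partial_i x : x\in C_i'\}$ cannot be a basis of $C_i$, since $\partial_i x\in C_{i-1}$. The substantive gap is this: to make $C_*$ a genuine direct sum $\bigoplus_i D^{(i)}_*$ of two-term complexes, one must replace the summand $C_{i-1}''$ of $C_{i-1}$ by $\partial_i(C_i')$, with basis $\tilde{c}_{i-1}:=c_{i-1}'\cup\partial_i(c_i')$. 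Writing $\gamma_i := pr_{C_{i-1}''}\circ\partial_i|_{C_i'}$ and $A_i := pr_{C_{i-1}'}\circ\partial_i|_{C_i'}$, the transition matrix from $c_{i-1}$ to $\tilde{c}_{i-1}$ is $\left(\begin{smallmatrix} I & A_i \\ 0 & \gamma_i \end{smallmatrix}\right)$; the lower diagonal block is $\gamma_i$, not the identity, so its Dieudonn\'e determinant is $\det\gamma_i$, not $\pm1$.

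Correspondingly, in the adapted basis $\tilde{\boldsymbol{c}}$ the differential of each piece $D^{(i)}_*$ is the identity matrix, so $\tau(D^{(i)}_*)=1$. The value $(\det\gamma_i)^{(-1)^i}$ you ascribe to $\tau(D^{(i)}_*)$ is the torsion of $0\to C_i'\xrightarrow{\gamma_i}C_{i-1}''\to 0$ with the \emph{original} subbases $c_i', c_{i-1}''$ of $\boldsymbol{c}$ — but with those bases $C_*$ does not split as a direct sum of such pieces, because $\partial_i$ also has the components $A_i$, $pr_{C_{i-1}'}\circ\partial_i|_{C_i''}$ and $pr_{C_{i-1}''}\circ\partial_i|_{C_i''}$. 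You are thus simultaneously crediting $\det\gamma_i$ to the base change (as a trivial factor) and to the piece (as the whole answer); it can live in only one place. The correct accounting,
\[ \tau(C_*,\boldsymbol{c}) = \Bigl(\prod_i [\tilde{c}_i/c_i]^{(-1)^{i+1}}\Bigr)\,\tau(C_*,\tilde{\boldsymbol{c}}) = \prod_i(\det\gamma_{i+1})^{(-1)^{i+1}}\cdot 1 = \pm\prod_j(\det\gamma_j)^{(-1)^j}, \]
recovers the lemma and is, as you anticipated, Turaev's computation in slightly different clothing.
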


Let $\varphi \colon \Z[\pi_1 X] \to \K$ be a ring homomorphism.
We take a cell decomposition of $X$ and define the twisted homology group associated to $\varphi$ as follows:
\[ H_i^{\varphi}(X; \K) := H_i(C_*(\widetilde{X}) \otimes_{\Z[\pi_1 X]} \K), \]
where $\widetilde{X}$ is the universal covering of $X$.

\begin{defn}
If $H_*^{\varphi}(X; \K) = 0$, then we define the \textit{Reidemeister torsion} $\tau_{\varphi}(X)$ associated to $\varphi$ as follows.
We choose a lift $\tilde{e}$ in $\widetilde{X}$ for each cell $e$ of $X$. Then
\[ \tau_{\varphi}(X) := [ \tau(C_*(\widetilde{X}) \otimes_{\Z[\pi_1 X]} \K, \langle \tilde{e} \otimes 1 \rangle_e)] \in \K_{ab}^{\times} / \pm \varphi(\pi_1 X). \]
\end{defn}

We can check that $\tau_{\varphi}(X)$ does not depend on the choice of $\tilde{e}$.
It is known that Reidemeister torsion is a simple homotopy invariant of a finite CW-complex.

\subsection{The Novikov complex} \label{sec_2_2}

Next we review the Novikov complex of $f$, which is the simplest version of Novikov's construction for closed $1$-forms in \cite{N}.
See also \cite{Paj2} and \cite{Paz1}.

We can lift $f$ to a function $\tilde{f} \colon \widetilde{X} \to \R$.
If $p$ is a critical point of $f$ or $\tilde{f}$, the \textit{unstable manifold} $\mathcal{D}(p)$ is the set of all points $x$ such that the upward gradient flow starting at $x$ converges to $p$.
Similarly, the \textit{stable manifold} $\mathcal{A}(p)$ is the set of all points $x$ such that the downward gradient flow starting at $x$ converges to $p$.
Recall that we chose a Riemann metric such that $\mathcal{D}(p) \pitchfork \mathcal{A}(p)$ for any critical points $p, q$ of $f$.

We take the ``downward'' generator $t$ of $\pi_1 S^1$.

\begin{defn}
We define the \textit{Novikov completion} $\Lambda_f$ of $\Z[\pi_1 X]$ associated to $f_* \colon \pi_1 X \to \langle t \rangle$ to be the set of a formal sum $\sum_{\gamma \in \pi_1 X} a_{\gamma} \gamma$ such that $a_{\gamma} \in \Z$ and for any $k \in \Z$, the number of $\gamma$ such that $a_{\gamma} \neq 0$ and $\deg f_*(\gamma) \leq k$ is finite. 
\end{defn}

\begin{defn}
The \textit{Novikov complex} $(C_*^{Nov}(f), \partial_*^f)$ of $f$ is defined as follows.
For each critical point $p$ of $f$, we choose a lift $\tilde{p} \in \widetilde{X}$.
Then we define $C_i^{Nov}(f)$ to be the free right $\Lambda_f$-module generated by the lifts $\tilde{p}$ of index $i$.
If $p$ is a critical point of index $i$, then
\[ \partial_i^f(\tilde{p} \cdot \gamma) := \sum_{q \text{ of index } i-1, \gamma' \in \pi_1 X} n(\tilde{p} \cdot \gamma, \tilde{q} \cdot \gamma') \tilde{q} \cdot \gamma', \]
where $n(\tilde{p} \cdot \gamma, \tilde{q} \cdot \gamma')$ is the algebraic intersection number of $\mathcal{D}(\tilde{p} \cdot \gamma)$, $\mathcal{A}(\tilde{q} \cdot \gamma')$ and an appropriate level set, which can be seen as the signed number of negative gradient flow lines from $\tilde{p} \cdot \gamma$ to $\tilde{q} \cdot \gamma'$.
By the linear extension we obtain the differential $\partial_i^f \colon C_i^{Nov}(f) \to C_{i-1}^{Nov}(f)$.
\end{defn}

Obviously, the definition dose not depend on the choices of $\tilde{f}$ and $\{ \tilde{p} \}$.
It is known that appropriate orientations of stable and unstable manifolds ensure that $\partial_{i-1}^f \circ \partial_i^f = 0$.

\begin{thm}[\cite{Paz1}] \label{thm_P}
The Novikov complex $C_*^{Nov}(f)$ is chain homotopic to $C_*(\widetilde{X}) \otimes_{\Z[\pi_1 X]} \Lambda_f$.
\end{thm}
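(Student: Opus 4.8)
The plan is to reduce the statement to classical real-valued Morse theory on a compact cobordism and then reassemble the pieces over the Novikov ring. First I would cut $X$ along a regular level set $V = f^{-1}(\mathrm{pt})$ to obtain a compact cobordism $W$ with $\partial W = V_0 \sqcup V_1$, $V_0 = g^{-1}(0)$, $V_1 = g^{-1}(1)$ both diffeomorphic to $V$, on which $f$ restricts to an ordinary Morse function $g \colon W \to [0,1]$ with the induced generic gradient. The infinite cyclic cover $\overline X = X \times_{S^1} \R$ is then built by gluing copies $W_n$ ($n \in \Z$) end to end, with the deck transformation $t$ shifting $W_n$ to $W_{n-1}$; lifting $g$ yields a proper Morse function $\overline g \colon \overline X \to \R$, and by construction the Novikov complex $C_*^{Nov}(f)$ is the $\Lambda_f$-completion of the assembly of the Morse complexes of $g$ on the $W_n$ --- the completion being exactly what is needed because a negative gradient trajectory on $\overline X$ may descend through arbitrarily many copies $W_n$.

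The classical input I would invoke is that for the Morse function $g$ on the compact cobordism $W$ with generic gradient the Morse (Thom--Smale--Witten) complex --- the free complex over $\Z[\pi_1 W]$ with one degree-$i$ generator per index-$i$ critical point and differential given by the signed flow-line counts $n(\cdot,\cdot)$ --- is simple homotopy equivalent to the relative cellular chain complex $C_*(\widetilde W, \widetilde{V_0})$. This is proved by extracting from the handle decomposition associated to $(g, \nabla g)$ a CW structure on $W$ rel $V_0$ with exactly one $i$-cell per index-$i$ critical point, and checking that cellular incidence numbers coincide with the Morse intersection numbers; the latter identity is the standard transversality-and-orientation computation and is where genericity of the gradient enters. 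One works equivariantly over $\widetilde W$ so that the $\pi_1$-module structure is built in.

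To pass to the circle-valued setting I would, rather than assembling naively (delicate, since $\pi_1 V \to \pi_1 W$ need not be injective), construct directly a chain map $\varphi \colon C_*^{Nov}(f) \to C_*(\widetilde X) \otimes_{\Z[\pi_1 X]} \Lambda_f$ sending a critical generator $\tilde p$ to the completed cellular cycle carried by a triangulation of its unstable disk in $\widetilde X$; that $\varphi$ is a chain map is again the Morse--cellular comparison, now carried out on $\overline X$ and pushed to completions, and relies on the moduli of flow lines being compact (ensured by the nondegeneracy hypotheses on $f$). Both sides carry the decreasing filtration by $\deg f_*$ (``number of times a chain crosses the level set $V$''), $\varphi$ is filtered, and both filtrations are complete and exhaustive --- completeness of the target filtration is precisely the finiteness condition in the definition of $\Lambda_f$. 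On the associated graded $\varphi$ restricts, copy by copy, to the equivalence of the previous paragraph for $(W,g)$, so it is a quasi-isomorphism on $\mathrm{gr}$; a standard complete-filtration argument then shows $\varphi$ itself is a quasi-isomorphism, and since both complexes are bounded complexes of finitely generated free (hence projective) $\Lambda_f$-modules, Whitehead's theorem upgrades the quasi-isomorphism to a chain homotopy equivalence.

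I expect the main obstacle to be the third paragraph: identifying the associated graded honestly requires bookkeeping of the fundamental-group data carried across $V$ (the assembly is an amalgamation, not a direct sum, and becomes more involved over the universal cover), and the complete-filtration comparison must be set up so that it applies to the target as well as the source. A viable alternative that sidesteps the explicit $\varphi$ is to prove the theorem first for ``cellular'' gradients --- for which $C_*^{Nov}(f)$ is literally the completed cellular chain complex of an adapted CW structure --- and then use Cerf-theoretic handle-slide and birth--death moves to connect an arbitrary generic gradient to a cellular one while controlling the effect on chain homotopy type; this is closer to Pazhitnov's original argument in \cite{Paz1}.
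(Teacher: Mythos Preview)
The paper does not prove this statement; Theorem~\ref{thm_P} is quoted from Pazhitnov~\cite{Paz1} as a known result and used as a black box thereafter. There is therefore no proof in the paper to compare your proposal against.

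That said, your outline is a reasonable sketch of one standard route. The filtered chain map sending a critical generator to the (compactified) unstable disk, followed by a complete-filtration comparison that reduces on the associated graded to the real-valued Morse--Smale/cellular identification on the cut cobordism $(W,g)$, is essentially the strategy used in the closed-$1$-form literature (Hutchings--Lee, Sch\"utz); your caveat about carrying $\pi_1$-data across the level set is exactly the place where care is required. The alternative you mention at the end --- proving the result first for cellular (``ordered'') gradients, where the Novikov complex literally is a completed cellular chain complex, and then reaching a general generic gradient by a continuation/handle-slide argument --- is indeed the line of Pazhitnov's original proof. Either approach works; nothing in your outline is wrong, but neither paragraph is yet a proof without substantial additional work (manifolds-with-corners compactification of the unstable disks in $\widetilde X$, the precise identification of $\mathrm{gr}\,\varphi$ over $\Z[\pi_1 X]$, and verification that the target filtration is complete in the relevant sense).
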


\section{The main theorem} \label{sec_3}
\subsection{Non-commutative zeta functions} \label{sec_3_1}
First we introduce a non-commutative zeta function $\zeta_f$ associated to $f$, which is closely related to the \textit{total Lefschetz-Nielsen invariant} of a self map in \cite{GN}.

Let $\Lambda_f^{\times}$ be the group of units of $\Lambda_f$.
Namely $\Lambda_f^{\times}$ consists of elements of $\Lambda_f$ having left and right inverses.
For $x, y \in (\Lambda_f^{\times})_{ab} := \Lambda_f^{\times} / [\Lambda_f^{\times}, \Lambda_f^{\times}]$, we write
\[ x \sim y \]
if for any $k \in \Z$, there exist representatives $\sum_{\gamma \in \pi_1 X} a_{\gamma} \gamma, \sum_{\gamma \in \pi_1 X} b_{\gamma} \gamma \in \Lambda_f^{\times}$ of $x, y$ respectively such that for any $\gamma \in \pi_1 X$ with $\deg f_*(\gamma) \leq k$, $a_{\gamma} = b_{\gamma}$.

\begin{lem} \label{lem_C}
The relation $\sim$ is an equivalence relation in $(\Lambda_f^{\times})_{ab}$.
\end{lem}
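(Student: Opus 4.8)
The relation $\sim$ is reflexive, since for each $k$ one may take the same representative of $x$ on both sides, and symmetric, since the defining condition is unchanged when $x$ and $y$ are interchanged. So the content of the lemma is transitivity, and I would prove it as follows. Assume $x \sim y$ and $y \sim z$. It suffices to verify the defining condition for $x \sim z$ when $k \ge 0$: a pair of representatives of $x$ and $z$ whose coefficients agree on $\{\gamma \in \pi_1 X : \deg f_*(\gamma) \le 0\}$ automatically has coefficients agreeing on $\{\gamma : \deg f_*(\gamma) \le k\}$ for every $k \le 0$.

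Fix $k \ge 0$. By $x \sim y$ choose representatives $a, b \in \Lambda_f^\times$ of $x$ and $y$ with $a - b$ supported in degrees $> k$, and by $y \sim z$ choose representatives $b', c \in \Lambda_f^\times$ of $y$ and $z$ with $b' - c$ supported in degrees $> k$. Since $b$ and $b'$ both represent $y$ in $(\Lambda_f^\times)_{ab}$, the unit $w := b^{-1} b'$ lies in $[\Lambda_f^\times, \Lambda_f^\times]$, so $a w$ is again a unit representing $x$. From $a w - c = (a - b) w + (b' - c)$ it follows that, provided $w$ has no terms of negative degree, the element $(a-b)w$, and hence $a w - c$, is supported in degrees $> k$; then $a w$ and $c$ are the required representatives of $x$ and $z$, so $x \sim z$.

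This reduces matters to the single algebraic fact that every element of $[\Lambda_f^\times, \Lambda_f^\times]$ is supported in degrees $\ge 0$, which I expect to be the only real obstacle. To prove it, assign to each $u = \sum_\gamma u_\gamma \gamma \in \Lambda_f^\times$ the integer $m(u) = \min\{\deg f_*(\gamma) : u_\gamma \ne 0\}$, with lowest-degree summand $u^{(m)}$. Using that $\ker f_*$ is normal in $\pi_1 X$, so that $\Lambda_f$ behaves like a twisted formal power series ring over $\Z[\ker f_*]$ graded via $f_*$, one compares lowest-degree parts in $u u^{-1} = u^{-1} u = 1$ to conclude that $u^{(m)}$ is invertible in $\Lambda_f$ — this is the step that demands genuine care, the issue being the possible cancellation of leading terms — and hence that $m(uv) = m(u) + m(v)$ for all units $u, v$. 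Then $m \colon \Lambda_f^\times \to \Z$ is a group homomorphism (restricting to $\deg \circ f_*$ on $\pi_1 X$), so it vanishes on $[\Lambda_f^\times, \Lambda_f^\times]$; and any $w$ with $m(w) = 0$ has no terms in negative degree, which is exactly what the transitivity argument required.
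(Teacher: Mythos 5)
Your argument is essentially the paper's: reduce to transitivity, and for a fixed $k$ pass between representatives by multiplying through by the commutator relating the two chosen representatives of $y$, using that this commutator has no terms of negative degree to propagate agreement up to degree $k$. (The paper writes $b = b'\lambda$ with $\lambda = (b')^{-1}b \in [\Lambda_f^\times,\Lambda_f^\times]$ and adjusts the representative of $z$ to $c'\lambda$; you write $w = b^{-1}b'$ and adjust the representative of $x$ to $aw$. These are the same computation.)

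The one genuinely nontrivial point is the assertion that a commutator in $\Lambda_f^\times$ has no negative-degree terms. The paper disposes of it with a bare ``Note that for any $\gamma$ with $\deg f_*(\gamma)<0$, $d_\gamma = 0$''. You correctly identify this as the crux and correctly reduce it to showing that $m(u) := \min\{\deg f_*(\gamma) : u_\gamma \neq 0\}$ defines a homomorphism $\Lambda_f^\times \to \Z$, which in turn you reduce to: the leading coefficient $u^{(m)}$ of a unit $u$ is itself invertible. However, the justification you then sketch is circular as stated. Comparing lowest-degree parts in $uu^{-1}=u^{-1}u=1$ yields $u^{(m)}(u^{-1})^{(m')}=1$ only when $m(u)+m(u^{-1})=0$; if instead $m(u)+m(u^{-1})<0$ (the cancellation case) the comparison yields $u^{(m)}(u^{-1})^{(m')}=0$, which does not give invertibility of $u^{(m)}$ but merely that it is a zero divisor. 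Ruling out that cancellation is exactly equivalent to the additivity $m(uv)=m(u)+m(v)$ that you are trying to prove, so no conclusion is reached; the step you flag as ``demanding genuine care'' is in fact the whole content, and it is not supplied. One needs an independent argument that in $\Lambda_f$ (a Novikov ring over a group ring $\Z[\ker f_*]$) a unit cannot have a zero-divisor as leading coefficient; this uses structural input about $\Z[\ker f_*]$ beyond what appears here (compare the Laurent-series ring over $\Z[x]/(x^2)$, where $x s^{-1}+1$ is a unit with nilpotent, non-invertible leading coefficient). To be fair, the paper's own proof offers no justification for this point either, so your write-up matches the paper's level of rigor while being more candid about where the substance lies.
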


\begin{proof}
We only need to show the transitivity.
We assume that $x \sim y$ and $y \sim z$ for $x, y ,z \in (\Lambda_f^{\times})_{ab}$ and for any $k \in \Z$ take representatives $\sum_{\gamma} a_{\gamma} \gamma, \sum_{\gamma} b_{\gamma} \gamma$ and $\sum_{\gamma} b_{\gamma}' \gamma, \sum_{\gamma} c_{\gamma}' \gamma$ of $x, y$ and $y, z$ respectively such that for any $\gamma$ with $\deg f_*(\gamma) \leq k$, $a_{\gamma} = b_{\gamma}$ and $b_{\gamma}' = c_{\gamma}'$.
There exists $\lambda = \sum_{\gamma} d_{\gamma} \gamma \in [\Lambda_f^{\times}, \Lambda_f^{\times}]$ such that $\sum_{\gamma} b_{\gamma} \gamma = (\sum_{\gamma} b_{\gamma}' \gamma) \lambda$.
Note that for any $\gamma$ with $deg f_*(\gamma) < 0$, $d_{\gamma} = 0$.
We set $c_{\gamma}$ so that $\sum_{\gamma} c_{\gamma} \gamma = (\sum_{\gamma} c_{\gamma}' \gamma) \lambda$.
Then $\sum_{\gamma} c_{\gamma} \gamma$ is also a representative of $z$, and for any $\gamma$ with $\deg f_*(\gamma) \leq k$, $a_{\gamma} = c_{\gamma}$.
We thus get $x \sim z$.
\end{proof}

We define $\overline{(\Lambda_f^{\times})}_{ab}$ to be the quotient set by the equivalence relation.
The abelian group structure of $(\Lambda_f^{\times})_{ab}$ naturally induces that of $\overline{(\Lambda_f^{\times})}_{ab}$.

A \textit{closed orbit} is a non-constant map $o \colon S^1 \to X$ with $\frac{do}{ds} = - \nabla f$.
Two closed orbits are called equivalent if they differ by linear parameterization.
We denote by $\mathcal{O}$ the set of the equivalence classes of closed orbits.
The \textit{period} $p(o)$ is the largest integer $p$ such that $o$ factors through a $p$-fold covering $S^1 \to S^1$. 
We assume that all the closed orbits are \textit{nondegenerate}, namely the determinant of $id - d \phi \colon T_x X / T_x o(S^1) \to T_x X / T_x o(S^1)$ does not vanish  for any $[o] \in \mathcal{O}$, where $\phi$ is a $p(o)$th return map around a point $x \in o(S^1)$.
The \textit{Lefschetz sign} $\epsilon(o)$ is the sign of the determinant.
We denote by $i_+(o)$ and $i_-(o)$ the numbers of real eigenvalues  of $d \phi \colon T_x X / T_x o(S^1) \to T_x X / T_x o(S^1)$ for a return map $\phi$ which are $> 1$ and $< -1$ respectively.

\begin{defn}
We number $[o] \in \mathcal{O}$ with $p(o) = 1$ as $\{ [o_i] \}_{i=1}^{\infty}$ and choose a path $\sigma_{o_i}$ from the base point of $X$ to a point of $o_i(S^1)$ for each $[o_i]$.
Then we have $[\sigma_{o_i} o_i \bar{\sigma}_{o_i}] \in \pi_1 X$, where $\bar{\sigma}_{o_i}$ is the inverse path of $\sigma_{o_i}$.
We define
\[ \zeta_f := \left[ \prod_{i=1}^{\infty} (1 - (-1)^{i_-(o_i)} [\sigma_{o_i} o_i \bar{\sigma}_{o_i}])^{(-1)^{i_+(o_i) + i_-(o_i) + 1}} \right] \in \overline{(\Lambda_f^{\times})}_{ab}. \]
\end{defn}

By the completeness of $\Lambda_f$ we can easily check that the infinite product $\prod_{i=1}^{\infty} (1 - (-1)^{i_-(o_i)} [\sigma_{o_i} o_i \bar{\sigma}_{o_i}])^{(-1)^{i_+(o_i) + i_-(o_i) + 1}} \in \Lambda_f^{\times}$ makes sense.

\begin{lem}
The zeta function $\zeta_f$ does not depend on the choices of $\{ [o_i] \}_{i=1}^{\infty}$ and $\sigma_{o_i}$.
\end{lem}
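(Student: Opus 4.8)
The plan is to fix an integer $k$, replace the infinite product defining $\zeta_f$ by a finite truncation that is visibly insensitive to the choices, and then return to $\overline{(\Lambda_f^{\times})}_{ab}$ using the equivalence relation $\sim$ exactly as in the proof of Lemma~\ref{lem_C}. Write $g_i := [\sigma_{o_i} o_i \bar{\sigma}_{o_i}] \in \pi_1 X$ and $F_i := (1 - (-1)^{i_-(o_i)} g_i)^{(-1)^{i_+(o_i) + i_-(o_i) + 1}} \in \Lambda_f^{\times}$, so that $\zeta_f$ is the class of $P := \prod_{i=1}^{\infty} F_i$. The signs $(-1)^{i_-(o_i)}$ and the exponents, as well as $\deg f_*(g_i)$, depend only on the class $[o_i] \in \mathcal{O}$ (for the last, because $f_*$ factors through the abelianization of $\pi_1 X$). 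A different choice of the paths $\sigma_{o_i}$ replaces each $g_i$ by a conjugate $h_i g_i h_i^{-1}$ with $h_i \in \pi_1 X$, hence replaces $F_i$ by $h_i F_i h_i^{-1}$ -- an honest identity in $\Lambda_f$, since $h_i \in \pi_1 X \subset \Lambda_f$ -- whereas a different enumeration of $\{[o_i]\}$ merely permutes the $F_i$.

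Now fix $k \in \Z$. The set $S := \{\, i : \deg f_*(g_i) \le k \,\}$ is finite (this finiteness underlies the convergence of $P$ noted after the definition of $\zeta_f$), and for every $i \notin S$ the element $F_i$ has constant term $1$ and all remaining terms of $\deg f_* > k$; hence any finite product of such $F_i$, and by completeness of $\Lambda_f$ also every convergent tail $\prod_{i > N} F_i$, is congruent to $1$ modulo terms of $\deg f_* > k$. Consequently, choosing $N$ large enough, $P$ agrees modulo $\deg f_* > k$ with $\prod_{i=1}^N F_i$, whose class in the abelian group $(\Lambda_f^{\times})_{ab}$ equals $\big(\prod_{i \in S}[F_i]\big)\cdot(\text{a class congruent to }1\text{ modulo }\deg f_* > k)$. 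The first factor $\prod_{i \in S}[F_i] \in (\Lambda_f^{\times})_{ab}$ is manifestly unchanged by a permutation of the $F_i$ (commutativity) and by conjugating each $F_i$ by an element of $\pi_1 X$ (triviality of inner automorphisms in the abelianization). So for two systems of choices the corresponding elements $P$, $P'$ of $\Lambda_f^{\times}$ have classes that differ, modulo terms of $\deg f_* > k$, only by a commutator $\lambda \in [\Lambda_f^{\times}, \Lambda_f^{\times}]$, and $\lambda$ -- as well as $\lambda^{-1}$ -- lies in the $\Z$-span of $\{\, \gamma \in \pi_1 X : \deg f_*(\gamma) \ge 0 \,\}$ by the computation in the proof of Lemma~\ref{lem_C}. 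Multiplying $P'$ by a suitable element of $[\Lambda_f^{\times}, \Lambda_f^{\times}]$ therefore yields a representative of its class that does not differ from $P$ in degrees $\le k$; since $k$ is arbitrary, $P$ and $P'$ represent the same element of $\overline{(\Lambda_f^{\times})}_{ab}$.

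The one delicate point is this last step -- passing from equality of the relevant finite classes in $(\Lambda_f^{\times})_{ab}$ to equivalence of the full products in $\overline{(\Lambda_f^{\times})}_{ab}$ -- which is the same degree-counting with the $\deg f_*$-filtration already carried out in the proof of Lemma~\ref{lem_C}, now applied after separating off the finitely many factors indexed by $S$. Everything else -- the behaviour of $F_i$ and its geometric data under a change of paths or of enumeration, and the convergence estimates for the tails -- is routine.
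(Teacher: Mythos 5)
Your proof is correct and follows essentially the same route as the paper's: fix $k$, split the product into the finitely many factors with $\deg f_*\le k$ and a tail congruent to $1$ modulo degree $>k$, use commutativity and triviality of inner automorphisms in $(\Lambda_f^{\times})_{ab}$ to identify the finite part under reordering and change of path, and then invoke the definition of $\sim$ to pass to $\overline{(\Lambda_f^{\times})}_{ab}$. The paper compresses this into a single chain of identities in $(\Lambda_f^{\times})_{ab}$, whereas you spell out the conjugation $g_i\mapsto h_ig_ih_i^{-1}$ and the commutator bookkeeping more explicitly, but the underlying argument is the same.
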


\begin{proof}
We take another sequence $\{ [o_i'] \}_{i=1}^{\infty}$ and another path $\sigma_{o_i}'$ for each $[o_i]$.
For any $k \in \Z$, since $\{ [o_i] \in \mathcal{O} ~;~ \deg f_*([o_i]) \leq k \}$, which equals $\{ [o_i'] \in \mathcal{O} ~;~ \deg f_*([o_i']) \leq k \}$, is a finite set and
\[ [1 \pm [\sigma_{o_i} o_i \bar{\sigma}_{o_i}]] = [1 \pm [\sigma_{o_i}' o_i \bar{\sigma}_{o_i}']] \]
in $(\Lambda_f^{\times})_{ab}$, 
\begin{align*}
&\left[ \prod_{i=1}^{\infty} (1 \pm [\sigma_{o_i} o_i \bar{\sigma}_{o_i}])^{\pm 1} \right] & & \\
= &\left[ \prod_{1 \leq i \leq \infty, \deg f_*([o_i]) \leq k} (1 \pm [\sigma_{o_i} o_i \bar{\sigma}_{o_i}])^{\pm 1} \right] & \left[ \prod_{1 \leq i \leq \infty, \deg f_*([o_i]) > k} (1 \pm [\sigma_{o_i} o_i \bar{\sigma}_{o_i}])^{\pm 1} \right] & \\
= &\left[ \prod_{1 \leq i \leq \infty, \deg f_*([o_i']) \leq k} (1 \pm [\sigma_{o_i'} o_i' \bar{\sigma}_{o_i'}])^{\pm 1} \right] & \left[ \prod_{1 \leq i \leq \infty, \deg f_*([o_i]) > k} (1 \pm [\sigma_{o_i} o_i \bar{\sigma}_{o_i}])^{\pm 1} \right] & \\
= &\left[ \prod_{1 \leq i \leq \infty, \deg f_*([o_i']) \leq k} (1 \pm [\sigma_{o_i'}' o_i' \bar{\sigma}_{o_i'}'])^{\pm 1} \right] & \left[ \prod_{1 \leq i \leq \infty, \deg f_*([o_i]) > k} (1 \pm [\sigma_{o_i} o_i \bar{\sigma}_{o_i}])^{\pm 1} \right] &.
\end{align*}
in $(\Lambda_f^{\times})_{ab}$.
Therefore for any $k \in \Z$, the products $\left[ \prod_{i=1}^{\infty} (1 \pm [\sigma_{o_i} o_i \bar{\sigma}_{o_i}])^{\pm 1} \right]$ and $\left[ \prod_{i=1}^{\infty} (1 \pm [\sigma_{o_i'}' o_i' \bar{\sigma}_{o_i'}'])^{\pm 1} \right]$ have representatives in $\Lambda_f^{\times}$ such that for any $\gamma \in \pi_1 X$ with $\deg f_*(\gamma) \leq k$, the coefficients of $\gamma$ are same, and the lemma follows.
\end{proof}

By the above lemma we can write 
\[ \zeta_f = \prod_{[o] \in \mathcal{O}, p(o) = 1} [1 - (-1)^{i_-(o)} [\sigma_{o} o \bar{\sigma}_{o}]]^{(-1)^{i_+(o) + i_-(o) + 1}}. \]

Let $\Lambda_f^+$ be the subring of $\Lambda_f$ whose element $\sum_{\gamma \in \pi_1 X} a_{\gamma} \gamma \in \Lambda_f$ satisfies the fact that $a_{\gamma} = 0$ if $\deg f_*(\gamma) \leq 0$.
There is a formal exponential $\exp \colon \Lambda_f^+ \to \Lambda_f$ given by $\exp(\lambda) := \sum_{n=0}^{\infty} \frac{\lambda^n}{n!}$.
Since
\[ \epsilon(o^j) = (-1)^{i_+(o) + (j + 1) i_-(o)} \]
and
\[ \exp \left( \sum_{j=1}^{\infty} \frac{(\pm \gamma)^j}{j} \right) = (1 \mp \gamma)^{-1} \]
for $\gamma \in \pi_1 X$ with $\deg f_*(\gamma) > 0$,
\[ \zeta_f = \prod_{[o] \in \mathcal{O}, p(o) = 1} \left[ \exp \left( \sum_{j=1}^{\infty} \frac{\epsilon(o^j)}{j} [\sigma_o o^j \bar{\sigma}_o] \right) \right], \]
where $o^j$ is the composition of a $j$-fold covering $S^1 \to S^1$ and $o$.
    
\subsection{Novikov-type skew fields}
Here we proceed to construct Novikov-type non-commutative coefficients for torsion and formulate the main theorem.

A group $G$ is called \textit{poly-torsion-free-abelian (PTFA)} if there exists a filtration
\[ 1 = G_0 \triangleleft G_1 \triangleleft \dots \triangleleft G_{n-1} \triangleleft G_n = G \]
such that $G_i / G_{i-1}$ is torsion free abelian.

\begin{prop}[\cite{Pas}]
If $G$ is a PTFA group, then $\Q[G]$ is a right (and left) Ore domain; namely $\Q[G]$ embeds in its classical right ring of quotients $\Q[G](\Q[G] \setminus 0)^{-1}$.
\end{prop}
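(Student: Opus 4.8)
The plan is to argue by induction on the length $n$ of a PTFA filtration $1 = G_0 \triangleleft G_1 \triangleleft \cdots \triangleleft G_n = G$. The case $n = 0$ is immediate since then $\Q[G] = \Q$ is a field. For the inductive step I would set $H := G_{n-1}$, which is normal in $G$ and inherits the PTFA filtration $1 = G_0 \triangleleft \cdots \triangleleft G_{n-1}$ of length $n-1$, so by the inductive hypothesis $\Q[H]$ is a right and left Ore domain with skew field of fractions $K := \Q[H](\Q[H] \setminus 0)^{-1}$. Writing $\Gamma := G/H$, a torsion-free abelian group, a choice of coset representatives exhibits $\Q[G]$ as a crossed product $\Q[H] * \Gamma = \bigoplus_{\gamma \in \Gamma} \Q[H]\, \bar{\gamma}$, in which each $\bar{\gamma}$ is a unit normalizing $\Q[H]$ and $\bar{\gamma}\bar{\delta} = \bar{\gamma \delta}\, \tau(\gamma, \delta)$ with $\tau(\gamma, \delta) \in H$.

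The first step is to see that $\Q[G]$ is a domain: since $\Gamma$ is torsion-free abelian it carries a bi-invariant total order, and comparing top terms (with respect to this order) of two nonzero elements shows that their product is nonzero, because the leading coefficient of the product is a product of nonzero elements of the domain $\Q[H]$. The second step is to check that $S := \Q[H] \setminus 0$ is a two-sided Ore set in $\Q[G]$ with $\Q[G]\,S^{-1} \cong K * \Gamma$: conjugation by each $\bar{\gamma}$ is a ring automorphism of $\Q[H]$ preserving $S$, and together with the Ore property of $S$ inside $\Q[H]$ this yields the Ore conditions for $S$ in $\Q[G]$ and the identification of the localization with the crossed product of $K$ by $\Gamma$.

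The third step is to prove that $K * \Gamma$ is itself a right and left Ore domain. Here I would write $\Gamma$ as a directed union $\bigcup_i \Gamma_i$ of finitely generated, hence free abelian, subgroups $\Gamma_i \cong \Z^{m_i}$, so that $K * \Gamma = \bigcup_i K * \Gamma_i$. Each $K * \Z^{m_i}$ is an iterated skew Laurent extension of the division ring $K$, hence right and left Noetherian by the skew Hilbert basis theorem, and it is a domain by the top-term argument; a Noetherian domain is an Ore domain, since if $aR \cap bR = 0$ for nonzero $a, b$ then $\bigoplus_{k \ge 0} a^k b R$ is an infinite direct sum of right ideals, contradicting the ascending chain condition. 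Since a directed union of right Ore domains is again a right Ore domain — two nonzero elements lie in a common $K * \Gamma_i$, where their right ideals already intersect nontrivially — it follows that $K * \Gamma$ is a right and left Ore domain. Finally, given nonzero $a, b \in \Q[G]$, the right Ore condition in $K * \Gamma = \Q[G]S^{-1}$ produces nonzero $p, q$ with $ap = bq$; pushing $p$ and $q$ over a common right denominator $s \in S$ as $p = \tilde c\, s^{-1}$, $q = \tilde c'\, s^{-1}$ with $\tilde c, \tilde c' \in \Q[G]$ nonzero, one gets $a\tilde c = b\tilde c' \neq 0$, so $a\Q[G] \cap b\Q[G] \neq 0$, and hence $\Q[G]$ is a right Ore domain; the left-handed statement is symmetric.

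The routine ingredients are the orderability argument, the Hilbert basis theorem, the implication that a Noetherian domain is Ore, and the directed-union statement. The step that requires the most care — and which I expect to be the main obstacle — is the second one: verifying that $S = \Q[H] \setminus 0$ really is a (two-sided) Ore set in the larger ring $\Q[G]$ and that localization commutes with the crossed-product construction, $\Q[G]S^{-1} \cong K * (G/H)$, since this is precisely where the non-commutativity interacts with the normal-subgroup structure.
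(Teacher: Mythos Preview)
The paper does not prove this proposition at all; it is simply quoted from Passman's book \cite{Pas} and used as a black box, so there is no ``paper's own proof'' to compare against. Your sketch, on the other hand, is essentially the classical inductive argument one finds in the literature (Passman, and later expositions by Cochran, Harvey, Cochran--Orr--Teichner, etc.): peel off the top torsion-free abelian quotient $\Gamma = G/H$, use the inductive hypothesis to get the skew field $K$ of $\Q[H]$, realize $\Q[G]$ as a crossed product $\Q[H]\ast\Gamma$, localize to $K\ast\Gamma$, and then use that $K\ast\Gamma$ is a directed union of iterated skew Laurent extensions of a division ring, each of which is Noetherian and hence Ore. All the steps you list are correct, including the final ``clear denominators'' step to descend the Ore condition from $K\ast\Gamma$ back to $\Q[G]$.

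The point you flag as the main obstacle---that $S=\Q[H]\setminus\{0\}$ is a two-sided denominator set in $\Q[G]$ and that $\Q[G]S^{-1}\cong K\ast\Gamma$---is indeed the one place needing a short explicit verification rather than a slogan. It goes through because each $\bar\gamma$ normalizes $\Q[H]$ via an automorphism $\sigma_\gamma$, so for $a=\sum a_i\bar\gamma_i$ and $s\in S$ one uses the right Ore property of $\Q[H]$ finitely many times (once for each $i$, then takes a common right multiple of the resulting elements $\sigma_{\gamma_i}^{-1}(t_i)$) to produce a single $s'\in S$ with $as'\in s\,\Q[G]$. With that checked, your argument is complete and is exactly the standard one underlying the citation.
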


Let $G$ be a PTFA group and $\rho \colon \pi_1 X \to G$, $\alpha \colon G \to \langle t \rangle$ be group homomorphisms such that $\alpha \circ \rho = f_*$.
Then $\ker \alpha$ is also PTFA, and so we have the classical ring of quotients $\mathcal{K}$ of $\Q[\ker \alpha]$.
We denote by $l$ the nonnegative integer such that $t^l$ generates $\im \alpha$.
We pick $\mu \in G$ such that $\alpha(\mu) = t^l$ and let $\theta \colon \mathcal{K} \to \mathcal{K}$ be the automorphism given by $\theta(k) = \mu k \mu^{-1}$ for $k \in \mathcal{K}$.
Now we have a Novikov type skew field $\mathcal{K}_{\theta}((t^l))$.
More precisely, the elements of $\mathcal{K}_{\theta}((t^l))$ are formal sums $\sum_{i = n}^{\infty} a_i t^{l i}$ with $n \in \Z$ and $a_i \in \mathcal{K}$, and the multiplication is defined by using the rule $t^l k = \theta(k) t^l$.
Note that the isomorphism type of the ring $\mathcal{K}_{\theta}((t^l))$ dose not depend on the choice of $\mu$, and we can regard $\Z[G]$ as a subring of $\mathcal{K}_{\theta}((t^l))$.

For $x, y \in \mathcal{K}_{\theta}((t))_{ab}^{\times}$, we write
\[ x \sim y \]
if for any $k \in \Z$, there exist representatives $\sum_{i \in \Z} a_i t^{l i}, \sum_{i \in \Z} b_i t^{l i} \in \mathcal{K}_{\theta}((t^l))^{\times}$ of $x, y$ respectively such that for any $i \leq k$, $a_i = b_i$.

The following lemma can be similarly proved as Lemma \ref{lem_C} and so we omit the proof.

\begin{lem}
The relation $\sim$ is an equivalence relation in $\mathcal{K}_{\theta}((t^l))_{ab}^{\times}$.
\end{lem}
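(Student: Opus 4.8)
The plan is to mimic the proof of Lemma \ref{lem_C}, transporting that argument from $(\Lambda_f^{\times})_{ab}$ to $\mathcal{K}_{\theta}((t^l))_{ab}^{\times}$, where the filtration by $\deg f_*$ is replaced by the filtration of $\mathcal{K}_{\theta}((t^l))$ by lowest $t^l$-degree. Reflexivity is immediate (take the same representative twice) and symmetry is clear from the symmetric phrasing of the relation, so, as in Lemma \ref{lem_C}, the only thing to prove is transitivity.

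First I would fix $x,y,z\in\mathcal{K}_{\theta}((t^l))_{ab}^{\times}$ with $x\sim y$ and $y\sim z$, and fix an arbitrary $k\in\Z$. By hypothesis there are representatives $u=\sum_i a_i t^{li},\ v=\sum_i b_i t^{li}\in\mathcal{K}_{\theta}((t^l))^{\times}$ of $x,y$ with $a_i=b_i$ for all $i\le k$, and representatives $v'=\sum_i b_i' t^{li},\ w'=\sum_i c_i' t^{li}\in\mathcal{K}_{\theta}((t^l))^{\times}$ of $y,z$ with $b_i'=c_i'$ for all $i\le k$. Since $v$ and $v'$ both represent $y$ in the abelianization, there is $\lambda\in[\mathcal{K}_{\theta}((t^l))^{\times},\mathcal{K}_{\theta}((t^l))^{\times}]$ with $v=v'\lambda$. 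The key point — exactly the analogue of the observation ``for any $\gamma$ with $\deg f_*(\gamma)<0$, $d_\gamma=0$'' in the proof of Lemma \ref{lem_C} — is that a commutator in $\mathcal{K}_{\theta}((t^l))^{\times}$ has lowest $t^l$-degree $0$ and constant term (the coefficient of $t^0$) equal to $1$; that is, $\lambda=1+(\text{terms of positive }t^l\text{-degree})$. This holds because the composite $\mathcal{K}_{\theta}((t^l))^{\times}\to\mathcal{K}^{\times}\to\mathcal{K}_{ab}^{\times}$ sending a unit to the abelianized class of its lowest-degree coefficient is a group homomorphism (using $t^lk=\theta(k)t^l$ and that $\theta$ induces the identity on $\mathcal{K}_{ab}^{\times}$ up to the relevant class, or more simply that the lowest term of a product is the product of lowest terms up to $\theta$-twist), hence kills commutators; combined with the fact that the lowest degree of a commutator is $0$.

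Then I would set $w:=w'\lambda$, which is again a representative of $z$ in $\mathcal{K}_{\theta}((t^l))_{ab}^{\times}$, and write $w=\sum_i c_i t^{li}$. It remains to check $a_i=c_i$ for all $i\le k$. Because $\lambda=1+(\text{higher }t^l\text{-degree terms})$, multiplication on the right by $\lambda$ does not change coefficients in degrees $\le j$ for \emph{any} $j$ in the following controlled sense: writing $\lambda = \sum_{m\ge 0}\lambda_m t^{lm}$ with $\lambda_0=1$, the coefficient of $t^{li}$ in $v'\lambda$ is $b_i' + \sum_{m\ge 1}(\text{contributions from }b_{i-m}'\text{ and }\theta\text{-twists of }\lambda_m)$, and similarly for $w'\lambda$; since $b_i'=c_i'$ for all $i\le k$, we get that the coefficient of $t^{li}$ in $v=v'\lambda$ equals the coefficient in $w=w'\lambda$ for all $i\le k$, i.e.\ $b_i=c_i$ for $i\le k$. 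Combined with $a_i=b_i$ for $i\le k$ this gives $a_i=c_i$ for $i\le k$, so $x\sim z$, as desired.

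The main obstacle I anticipate is precisely the structural claim about commutators: verifying that any $\lambda\in[\mathcal{K}_{\theta}((t^l))^{\times},\mathcal{K}_{\theta}((t^l))^{\times}]$ has the form $1+(\text{positive-degree terms})$. One must be slightly careful because $\mathcal{K}$ is noncommutative, so ``lowest-degree coefficient'' only behaves well after passing to $\mathcal{K}_{ab}^{\times}$, and one must also track the $\theta$-twist coming from $t^lk=\theta(k)t^l$; however, since $\theta$ is an inner automorphism of $\mathcal{K}$ (conjugation by $\mu$), it is trivial on $\mathcal{K}_{ab}^{\times}$, which makes the lowest-term map a genuine group homomorphism to $\mathcal{K}_{ab}^{\times}$ and hence kills all commutators. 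Once that homomorphism is in hand the rest is the same bookkeeping as in Lemma \ref{lem_C}, so I would present it briefly and refer back to that proof.
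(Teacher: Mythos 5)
Your overall approach is the right one and is exactly what the paper means by ``similarly proved as Lemma \ref{lem_C}'': transitivity reduces to showing that any $\lambda\in[\mathcal{K}_{\theta}((t^l))^{\times},\mathcal{K}_{\theta}((t^l))^{\times}]$ is supported in nonnegative $t^l$-degrees, after which right multiplication by $\lambda$ preserves the property ``coefficients agree through degree $k$.'' However, your intermediate structural claim is too strong and in fact false: a commutator in $\mathcal{K}_{\theta}((t^l))^{\times}$ need \emph{not} have constant term $1$. When $\mathcal{K}$ is noncommutative (the typical case here, since $\ker\alpha$ is PTFA but generally nonabelian), pick $a,b\in\mathcal{K}^{\times}$ with $aba^{-1}b^{-1}\neq 1$; then $[a,b]\in\mathcal{K}^{\times}\subset\mathcal{K}_{\theta}((t^l))^{\times}$ is a commutator whose constant term is $aba^{-1}b^{-1}\neq 1$. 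The reasoning you offer only shows that the constant term lies in $[\mathcal{K}^{\times},\mathcal{K}^{\times}]$ --- that is what ``the composite $\mathcal{K}_{\theta}((t^l))^{\times}\to\mathcal{K}^{\times}\to\mathcal{K}_{ab}^{\times}$ kills commutators'' means --- not that it equals $1$.

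Fortunately this false claim is not load-bearing. What the argument actually needs, and what is the precise analogue of the paper's observation in Lemma \ref{lem_C} that $d_{\gamma}=0$ whenever $\deg f_*(\gamma)<0$, is only that $\lambda_m=0$ for $m<0$, i.e.\ that $\lambda$ has $t^l$-order $0$. This you do establish correctly: since $\mathcal{K}$ is a skew field, the lowest-degree coefficient of a product is the nonzero product of the $\theta$-twisted lowest-degree coefficients, so the $t^l$-order is a group homomorphism $\mathcal{K}_{\theta}((t^l))^{\times}\to\Z$ and therefore vanishes on commutators. Writing $\lambda=\sum_{m\geq 0}\lambda_m t^{lm}$ with no constraint on $\lambda_0$, the coefficient of $t^{li}$ in $v'\lambda$ is $\sum_{m\geq 0}b_{i-m}'\,\theta^{i-m}(\lambda_m)$, which depends only on the $b_j'$ with $j\leq i$; the rest of your bookkeeping then goes through unchanged and yields $b_i=c_i$ for $i\leq k$, hence $a_i=c_i$ for $i\leq k$. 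So the proof is correct once you drop the $\lambda_0=1$ assertion and the accompanying $\mathcal{K}_{ab}^{\times}$ detour, retaining only the valuation argument.
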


We define $\overline{\mathcal{K}_{\theta}((t^l))}_{ab}^{\times}$ to be the quotient set by the equivalence relation, which is also an abelian group.
Note that if $\mathcal{K}_{\theta}((t^l))$ is commutative, then $\overline{\mathcal{K}_{\theta}((t^l))}_{ab}^{\times} = \mathcal{K}_{\theta}((t^l))_{ab}^{\times}$.
Moreover We can show that the natural map $\mathcal{K}_{ab}^{\times} \to \overline{\mathcal{K}_{\theta}((t^l))}_{ab}^{\times}$ is injective as follows.
Let $a \in \mathcal{K}^{\times}$ and assume $[a] = 1 \in \overline{\mathcal{K}_{\theta}((t^l))}_{ab}^{\times}$.
Then there exists $\lambda \in [\mathcal{K}_{\theta}((t^l))^{\times}, \mathcal{K}_{\theta}((t^l))^{\times}]$ such that $a$ is equal to the degree $0$ part of $\lambda$, which is in $[\mathcal{K}^{\times}, \mathcal{K}^{\times}]$.
Therefore $[a] = 1 \in \mathcal{K}_{ab}^{\times}$.

The group homomorphism $\rho$ naturally extends to a ring homomorphism $\Lambda_f \to \mathcal{K}_{\theta}((t^l))$.
By abuse of notation, we also denote it by $\rho$.
By virtue of Theorem \ref{thm_P} $H_*^{\rho}(X ; \mathcal{K}_{\theta}((t^l)))$ is isomorphic to $H_*(C_*^{Nov}(f) \otimes_{\Lambda} \mathcal{K}_{\theta}((t^l)))$.

\begin{defn}
If $H_*^{\rho}(X ; \mathcal{K}_{\theta}((t^l))) = 0$, then we define the \textit{Novikov torsion} $\tau_{\varphi}(X)$ associated to $\rho$ as
\[ \tau_{\rho}^{Nov}(f) := [\tau(C_*^{Nov}(f) \otimes_{\Lambda_f} \mathcal{K}_{\theta}((t^l)), \langle \tilde{p} \otimes 1 \rangle_p)] \in \mathcal{K}_{\theta}((t^l))_{ab}^{\times} / \pm \rho(\pi_1 X). \]
\end{defn}

The ring homomorphism $\rho \colon \Lambda_f \to \mathcal{K}_{\theta}((t^l))$ naturally induces a group homomorphism $\rho_* \colon \overline{(\Lambda_f^{\times})}_{ab} \to \overline{\mathcal{K}_{\theta}((t^l))}_{ab}^{\times}$ and $\overline{\mathcal{K}_{\theta}((t^l))}_{ab}^{\times} / \pm \rho(\pi_1 X)$ is a quotient group of $\mathcal{K}_{\theta}((t^l))_{ab}^{\times} / \pm \rho(\pi_1 X)$.

\begin{thm}[Main theorem] \label{thm_M}
For a given pair $(\rho, \alpha)$ as above, if $H_*^{\rho}(X ; \mathcal{K}_{\theta}((t^l))) = 0$, then
\[ \tau_{\rho}(X) = \rho_*(\zeta_f) \tau_{\rho}^{Nov}(f) \in \overline{\mathcal{K}_{\theta}((t^l))}_{ab}^{\times} / \pm \rho(\pi_1 X). \]
\end{thm}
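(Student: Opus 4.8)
The plan is to follow the scheme of Hutchings--Lee \cite{HL2}, reducing the identity to a statement about the torsion of an explicit chain homotopy equivalence, but carrying out each step over the skew field $\mathcal{K}_\theta((t^l))$ and tracking everything in the abelianized unit groups rather than in $K_1$. The starting point is Theorem \ref{thm_P}: there is a chain homotopy equivalence $\phi \colon C_*^{Nov}(f) \to C_*(\widetilde{X}) \otimes_{\Z[\pi_1 X]} \Lambda_f$, and after applying $-\otimes_{\Lambda_f} \mathcal{K}_\theta((t^l))$ we get a chain homotopy equivalence between two acyclic based complexes. The usual multiplicativity of torsion (Lemma \ref{lem_M}, applied to the algebraic mapping cone of $\phi$) gives
\[ \tau_\rho(X) = \tau_\rho^{Nov}(f) \cdot \tau(\phi) \in \mathcal{K}_\theta((t^l))_{ab}^\times / \pm \rho(\pi_1 X), \]
where $\tau(\phi)$ is the torsion of the chain homotopy equivalence with respect to the given geometric bases. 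So the entire content of the theorem is the identification $\tau(\phi) = \rho_*(\zeta_f)$ in $\overline{\mathcal{K}_\theta((t^l))}_{ab}^\times / \pm \rho(\pi_1 X)$.

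**Identifying the torsion of $\phi$.** For this I would build $\phi$ geometrically as in \cite{Paz1}, \cite{Sc1}: it counts gradient flow lines from critical points to cells, and its ``leading term'' (the degree-zero part with respect to the $t^l$-filtration) is the identity on the nose after an appropriate choice of cell structure adapted to the handle decomposition. The key point is that $\phi$ can be written as $\phi = \phi_0 \circ (\mathrm{id} + N)$ where $\phi_0$ is a filtration-preserving isomorphism with trivial torsion mod $\pm\rho(\pi_1 X)$ and $N$ strictly raises the $t^l$-filtration; then $\det(\mathrm{id}+N) = [\det(\mathrm{id}+N)]$ is a well-defined element of $\mathcal{K}_\theta((t^l))_{ab}^\times$, congruent to $1$ modulo arbitrarily high powers of $t^l$. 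Using Lemma \ref{lem_T} to reduce the Dieudonn\'e determinant of $\mathrm{id}+N$ to a product of determinants of the diagonal-type blocks produced by elementary operations, one expresses $\tau(\phi)$ as a ``zeta function'' that enumerates the broken flow lines and closed orbits interpolating between $C_*^{Nov}(f)$ and $C_*(\widetilde{X})$. This is exactly where the Hutchings--Lee cancellation argument enters: flow lines that are not closed orbits cancel in pairs, and what survives is the contribution of the closed orbits, weighted by the signs $\epsilon(o^j)$ and grouped by period. The exponential formula for $\zeta_f$ displayed at the end of Section \ref{sec_3_1} is precisely the output of this bookkeeping.

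**The main obstacle.** The hard part is the non-commutativity in this last step. Over a commutative ring the surviving closed-orbit contributions can be reorganized freely into the Euler-characteristic-weighted product defining $\zeta_f$; over $\mathcal{K}_\theta((t^l))$ one must argue that, although individual terms $[\sigma_o o^j \bar\sigma_o]$ do not commute with each other in $\mathcal{K}_\theta((t^l))^\times$, their images in the \emph{abelianized} group $\mathcal{K}_\theta((t^l))_{ab}^\times$ do, and that the rearrangement needed to pass from the raw alternating product of $\det$-blocks to the closed-form exponential is legitimate there. Concretely I would: (i) show that the twisted group ring elements appearing as entries of $\mathrm{id}+N$ all lie in the subring analogous to $\Lambda_f^+$, so that $\log\det$ makes sense as a convergent sum via the formal $\log/\exp$ on that subring (the analogue of $\exp\colon \Lambda_f^+ \to \Lambda_f$ used for $\zeta_f$); (ii) use the Dieudonn\'e-determinant identity $\det(AB) = \det(A)\det(B)$ together with the trace-like additivity of $\log\det$ on block-triangular matrices to reduce $\tau(\phi)$ to $\sum_j \frac{1}{j}\,\mathrm{tr}\,(\text{$j$-th power of the flow operator})$ in the abelianization; and (iii) match this, term by term, with the orbit sum $\sum_{[o],\,p(o)=1}\sum_j \frac{\epsilon(o^j)}{j}[\sigma_o o^j \bar\sigma_o]$, invoking the equivalence relation $\sim$ to discard the high-filtration remainder. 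The passage to $\overline{\mathcal{K}_\theta((t^l))}_{ab}^\times$ is what makes all of these formal identities hold, since it forces agreement only up to arbitrarily high order in $t^l$, and the flow-line count is finite in each filtration level by completeness of $\Lambda_f$ and hence of $\mathcal{K}_\theta((t^l))$. I expect the remaining verifications — the geometric construction of $\phi$ with trivial leading torsion, and the sign computation $\epsilon(o^j) = (-1)^{i_+(o)+(j+1)i_-(o)}$ already recorded above — to be essentially as in the abelian case, with the Dieudonn\'e-determinant formalism of Lemma \ref{lem_T} substituting for ordinary determinants throughout.
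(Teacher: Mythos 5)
Your factorization $\tau_\rho(X) = \tau_\rho^{Nov}(f)\cdot\tau(\phi)$ via Theorem \ref{thm_P} and the mapping cone is the Pajitnov-style route (the paper acknowledges, in the remarks after the main theorem statement in the introduction, that the result ``can also be deduced from the results of Pajitnov in \cite{Paj1}''), whereas the paper's actual proof in Section \ref{sec_4} follows the Hutchings--Lee blueprint: cut $X$ along a level set, build a cellular ``approximate'' complex $X'$ from the mapping cylinder of a cellular approximation $h$, show $\tau_\rho(X)=\tau_\rho(X')$ by Mayer--Vietoris and Lemma \ref{lem_M} (Lemma \ref{lem_p1}), and then split $\tau_\rho(X')$ directly into a return-map factor $\prod[\det(I-\phi_{i-1})]^{(-1)^i}$ and a Novikov factor $\prod[\det K_i]^{(-1)^i}$ (Lemma \ref{lem_p2}). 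So although you announce you will ``follow the scheme of Hutchings--Lee,'' what you outline is the other route. That in itself would be acceptable, since the paper says both work, but the route you choose transfers exactly the same non-commutative difficulty into $\tau(\phi)$, and your treatment of that difficulty has a real gap.

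The gap is in your step (ii): you want to pass from $\det(\mathrm{id}+N)$ to $\sum_j \frac{1}{j}\operatorname{tr}(\text{flow operator})^j$ by a ``trace-like additivity of $\log\det$.'' For the Dieudonn\'e determinant over a skew field there is no general identity $[\det(I-A)] = [\exp(-\sum_j \operatorname{tr}(A^j)/j)]$: the trace of a matrix over $\K$ lives in the additive quotient $\K/[\K,\K]_{\mathrm{add}}$, not in $\K^\times_{ab}$, and exponentiation does not carry one to the other. The paper never uses a global trace formula. Instead, its key technical device is Lemma \ref{lem_p4}, a combinatorial block-reduction of Dieudonn\'e determinants of matrices $I-A$ whose entries are monomials of positive degree, valid up to $\sim_k$ whenever short cyclic products through a fixed index vanish. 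This lemma, applied repeatedly in the proof of Lemma \ref{lem_p3} together with the geometric isolation of the closed orbits into disjoint neighborhoods $N_{[o]}$, \emph{localizes} the determinant to a product over closed orbits. Only on each such local block are the entries confined to the commutative subring $\rho(\Z[[\sigma_o o \bar\sigma_o]])$, and it is only there that the classical $\exp\operatorname{tr}\log$ machinery (the ``Lefschetz fixed point'' computation) is invoked. Your proposal elides exactly this localization step, which is the heart of the non-commutative argument. Without a substitute for Lemma \ref{lem_p4} or an explicit justification that the entries of $N$ can be brought simultaneously into a commutative subring, the claimed reduction of $\tau(\phi)$ to the orbit sum does not go through.

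A secondary gap: you assert that $\phi$ decomposes as $\phi_0\circ(\mathrm{id}+N)$ with $\tau(\phi_0)$ trivial mod $\pm\rho(\pi_1 X)$. Theorem \ref{thm_P} as stated only gives an abstract chain homotopy equivalence; obtaining one whose filtration-zero part is a based isomorphism with trivial torsion requires the specific geometric construction of Pajitnov/Pazhitnov (counting broken flow lines and soul-level cells). You gesture at \cite{Paz1} and \cite{Sc1}, which is the right citation, but for the non-commutative coefficient ring $\mathcal{K}_\theta((t^l))$ this needs to be spelled out; in the paper the analogous concern is handled in Lemma \ref{lem_p5} by a $\sim_k$-comparison of $\partial_i^f$ with $\boldsymbol{K}_i$ after perturbation by $h$, again using the completed quotient $\overline{\mathcal{K}_\theta((t^l))}^\times_{ab}$ to discard high-order discrepancies.
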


\begin{rem}
More generally, the similar construction makes sense and the theorem also holds under the assumption that $\Q[\ker \alpha]$ is a right Ore domain instead of that $G$ is PTFA.
Moreover it is expected that we can eliminate the ambiguity of multiplication by an element of $\rho(\pi_1 X)$, carefully considering \textit{Euler structures} by Turaev \cite{T1}, \cite{T2} as in \cite{HL2}. 
\end{rem}

An important example of a pair $(\rho, \alpha)$ is provided by Harvey's \textit{rational derived series} in \cite{Ha}.

\begin{defn}
For a group $\Pi$, let $\Pi_r^{(0)} = \Pi$ and we inductively define
\[ \Pi_r^{(i)} = \{ \gamma \in \Pi_r^{(i-1)} ~;~ \gamma^k \in [\Pi_r^{(i-1)}, \Pi_r^{(i-1)}] \text{ for some } k \in \Z \setminus 0 \}. \]
\end{defn}

\begin{lem}[\cite{Ha}]
For any group $\Pi$ and any $n$,
\[ \Pi_r^{(n-1)} / \Pi_r^{(n)} = (\Pi_r^{(n-1)} / [\Pi_r^{(n-1)}, \Pi_r^{(n-1)}]) / \text{ torsion} \]
and $\Pi / \Pi_r^{(n)}$ is a PTFA group.
\end{lem}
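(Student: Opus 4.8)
The plan is to prove both assertions together by induction on the index, the one nontrivial input being the elementary fact that the torsion elements of an abelian group form a subgroup.

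First I would check that the recursive definition is well-posed and read off the first displayed identity. Writing $H := \Pi_r^{(n-1)}$ and letting $q \colon H \to H/[H,H]$ be the abelianization map, the definition says exactly that $\Pi_r^{(n)} = q^{-1}(T)$, where $T \leq H/[H,H]$ is the subgroup of torsion elements. Hence $\Pi_r^{(n)}$ is a subgroup of $H$ containing $[H,H]$, so it is normal in $H$, and $q$ induces an isomorphism $H/\Pi_r^{(n)} \cong (H/[H,H])/T$. This is the asserted equality $\Pi_r^{(n-1)}/\Pi_r^{(n)} = (\Pi_r^{(n-1)}/[\Pi_r^{(n-1)},\Pi_r^{(n-1)}])/\text{torsion}$, and in particular the quotient is torsion-free abelian. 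Running this argument for $i = 1, 2, \dots, n$ in turn shows by induction that each $\Pi_r^{(i)}$ is a subgroup of $\Pi_r^{(i-1)}$ (so that $[\Pi_r^{(i)},\Pi_r^{(i)}]$, hence $\Pi_r^{(i+1)}$, makes sense), that $\Pi_r^{(i)} \triangleleft \Pi_r^{(i-1)}$, and that $\Pi_r^{(i-1)}/\Pi_r^{(i)}$ is torsion-free abelian.

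Next I would assemble the PTFA filtration of $\Pi/\Pi_r^{(n)}$. Because $\Pi_r^{(i)} \subseteq \Pi_r^{(i-1)}$ for every $i$, we have $\Pi_r^{(n)} \subseteq \Pi_r^{(n-j)}$ for $0 \le j \le n$, so the subgroups $G_j := \Pi_r^{(n-j)}/\Pi_r^{(n)}$ of $\Pi/\Pi_r^{(n)}$ are well defined; clearly $G_0 = 1$ and $G_n = \Pi/\Pi_r^{(n)}$. For each $j$, the relation $\Pi_r^{(n-j+1)} \triangleleft \Pi_r^{(n-j)}$ established above gives $G_{j-1} \triangleleft G_j$ together with an isomorphism $G_j/G_{j-1} \cong \Pi_r^{(n-j)}/\Pi_r^{(n-j+1)}$, which is torsion-free abelian by the first part. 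Thus $1 = G_0 \triangleleft G_1 \triangleleft \dots \triangleleft G_n = \Pi/\Pi_r^{(n)}$ is a filtration of the required type, and $\Pi/\Pi_r^{(n)}$ is PTFA.

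I do not expect a genuine obstacle; the only points requiring attention are the well-posedness of the recursion --- each $\Pi_r^{(i)}$ must be known to be a group before $\Pi_r^{(i+1)}$ is formed, which is why the subgroup property is folded into the same induction --- and keeping the indices straight so that $G_j$ increases with $j$ and each successive quotient matches one of the torsion-free abelian groups $\Pi_r^{(n-j)}/\Pi_r^{(n-j+1)}$ from the first step.
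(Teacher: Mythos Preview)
The paper does not prove this lemma; it is quoted from Harvey~\cite{Ha} without argument. So there is no ``paper's own proof'' to compare against, and your write-up would in fact supply what the paper omits.

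Your argument is essentially correct, but there is one point you should tighten. You only establish $\Pi_r^{(i)} \triangleleft \Pi_r^{(i-1)}$, and normality is not transitive; yet in the second paragraph you form $\Pi/\Pi_r^{(n)}$ and the subgroups $G_j = \Pi_r^{(n-j)}/\Pi_r^{(n)}$, which tacitly requires $\Pi_r^{(n)} \triangleleft \Pi$ (and, for the third-isomorphism step $G_j/G_{j-1}\cong \Pi_r^{(n-j)}/\Pi_r^{(n-j+1)}$, at least $\Pi_r^{(n)} \triangleleft \Pi_r^{(n-j)}$). The fix is a one-word upgrade of your own observation: since $\Pi_r^{(i)} = q^{-1}(T)$ is the preimage of the \emph{characteristic} torsion subgroup under the canonical abelianization map, it is characteristic in $\Pi_r^{(i-1)}$, not merely normal. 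Characteristic-in-characteristic is characteristic, so by the same induction each $\Pi_r^{(i)}$ is characteristic---hence normal---in $\Pi$, and all the quotients you form are legitimate. With that adjustment the proof goes through as written.
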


For any $n$, we have the natural surjection $\rho^{(n)} \colon \pi_1 X \to \pi_1 X / (\pi_1 X)_r^{(n+1)}$ and the induced homomorphism $\alpha^{(n)} \colon \pi_1 X / (\pi_1 X)_r^{(n+1)} \to \langle t \rangle$ by $f_*$.
By the above construction we obtain the group homomorphism $\rho^{(n)} \colon \Z[\pi_1 X] \to \mathcal{K}_{\theta}^{(n)}(t^l)$ and the extended one $\tilde{\rho}^{(n)} \colon \Lambda_f \to \mathcal{K}_{\theta}^{(n)}((t^l))$, where $\mathcal{K}_{\theta}^{(n)}(t^l)$ is the subfield of $\mathcal{K}_{\theta}^{(n)}((t^l))$ consisting of rational elements. 

\begin{defn}
If $H_*^{\rho^{(n)}}(X ; \mathcal{K}_{\theta}^{(n)}(t^l)) = 0$, then $\tau_{\rho^{(n)}}(X) \in \mathcal{K}_{\theta}^{(n)}(t^l)_{ab}^{\times} / \pm \rho^{(n)}(\pi_1 X)$ is defined.
We call it the \textit{higher-order Reidemeister torsion} of order $n$.
\end{defn}

\begin{rem}
It is known by Friedl that $\tau_{\rho^{(n)}}(X)$ equals an alternating product of non-commutative Alexander polynomials.
See \cite{F} for the details.
\end{rem}

If $H_*^{\rho^{(n)}}(X ; \mathcal{K}_{\theta}^{(n)}(t^l)) = 0$, then we have $H_*^{\tilde{\rho}^{(n)}}(X ; \mathcal{K}_{\theta}^{(n)}((t^l))) = 0$, and we can also define $\tau_{\tilde{\rho}^{(n)}}(X) \in \mathcal{K}_{\theta}^{(n)}((t^l))_{ab}^{\times} / \pm \rho^{(n)}(\pi_1 X)$.
By the functoriality of Reidemeister torsion the image of $\tau_{\rho^{(n)}}(X)$ by the natural map $\mathcal{K}_{\theta}^{(n)}(t^l)_{ab}^{\times} / \pm \rho^{(n)}(\pi_1 X) \to \mathcal{K}_{\theta}^{(n)}((t^l))_{ab}^{\times} / \pm \rho^{(n)}(\pi_1 X)$ equals $\tau_{\tilde{\rho}^{(n)}}(X)$.
Thus for the pair $(\rho^{(n)}, \alpha^{(n)})$, the main theorem gives a Morse theoretical and dynamical presentation of $\tau_{\rho^{(n)}}(X)$ in $\overline{\mathcal{K}_{\theta}^{(n)}((t^l))}_{ab}^{\times} / \pm \rho^{(n)}(\pi_1 X)$ as a corollary.

\section{Proof} \label{sec_4}

The proof of the main theorem is divided into two parts.
In the first part we construct an ``approximate'' CW complex $X'$ which is adapted to $\nabla f$, and we show that the Reidemeister torsion of $X'$ equals that of $X$.
The second part is devoted to computation of the torsion of $X'$, and we see that it has the desired form.

\subsection{An approximate CW-complex} \label{subsec_4_1}
Let $\Sigma$ be a level set of a regular value of $f$ and let $Y$ be the compact Riemannian manifold obtained by cutting $X$ along $\Sigma$.
We can pick a Morse function $f_0 \colon Y \to \R$ induced by $f$.
We write $\partial Y = \Sigma_0 \sqcup \Sigma_1$, where $\Sigma_0, \Sigma_1$ are the cutting hypersurfaces and $- \nabla f_0$ points outward along $\Sigma_0$.
We denote by $\mathcal{A}_0(p), \mathcal{D}_0(p)$ the stable and unstable manifolds of a critical point $p$ of $f_0$.

We take a smooth triangulation $T_1$ of $\Sigma_1$ such that each simplex is transverse to $\mathcal{A}_0(p)$ for each critical point $p$ of $f_0$.
For $\sigma \in T_1$, let us denote by $\mathcal{F}(\sigma)$ the set of all $y \in Y$ such that the flow of $\nabla f_0$ starting at $y$ hits $\sigma$.
It is well-known that the submanifolds $\mathcal{D}_0(p)$ and $\mathcal{F}(\sigma)$ have natural compactifications $\overline{\mathcal{D}_0(p)}$ and $\overline{\mathcal{F}(\sigma)}$ respectively by adding broken flow lines of $-\nabla f_0$.
(See for instance \cite{HL1}.) 
We choose a cell decomposition $T_0$ of $\Sigma_0$ such that $\overline{\mathcal{D}_0(p)} \cap \Sigma_0$ and $\overline{\mathcal{F}(\sigma)} \cap \Sigma_0$ are subcomplexes for each critical point $p$ and each simplex $\sigma$.
Then we can check that the cells in $T_0, T_1$, $\overline{\mathcal{D}_0(p)}$ and $\overline{\mathcal{F}(\sigma)}$ give a cell decomposition $T_Y$ of $Y$.

Let $h \colon (\Sigma_0, T_0) \to (\Sigma_1, T_1)$ be a cellular approximation to the canonical identification $\Sigma_0 \to \Sigma_1$. 
We consider the mapping cylinder $M_h$ of $h$:
\[ M_h := ((\Sigma_0 \times [0, 1]) \sqcup \Sigma_1) / (x, 1) \sim h(x). \]
It has a natural cell decomposition induced by $T_0$ and $T_1$.

\begin{defn}
Let $X'$ be the space obtained by gluing $Y$ and $M_h$ along $\Sigma_0 \sqcup \Sigma_1$.
\end{defn}

For a cell $\Delta$ in $T_Y$ of the form $\overline{\mathcal{D}_0(p)}$ and $\overline{\mathcal{F}(\sigma)}$, we define $\widehat{\Delta}$ to be the set obtained by gluing $\Delta$ and
\[ (((\Delta \cap \Sigma_0) \times [0, 1]) \sqcup h(\Delta \cap \Sigma_0)) / (x, 1) \sim h(x) \]
along $\Delta \cap \Sigma_0$.
Cells of the form $\widehat{\mathcal{D}_0(p)}$, $\sigma$ and $\widehat{\mathcal{F}(\sigma)}$ for a critical point $p$ of $f_0$ and $\sigma \in T_1$ give a cell decomposition of $X'$.

We pick a homotopy equivalent map $X' \to X$ and identify $\pi_1 X'$ with $\pi_1 X$.

\begin{lem} \label{lem_p1}
Under the assumptions of Theorem \ref{thm_M} we have
\[ \tau_{\rho}(X) = \tau_{\rho}(X'). \]
\end{lem}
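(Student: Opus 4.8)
The plan is to show that $X'$ is simple homotopy equivalent to $X$, which suffices since Reidemeister torsion is a simple homotopy invariant of finite CW-complexes (as recalled after the definition of $\tau_\varphi$). First I would observe that $X'$ is built from two pieces, $Y$ and the mapping cylinder $M_h$, glued along $\Sigma_0\sqcup\Sigma_1$. The mapping cylinder $M_h$ deformation retracts onto $\Sigma_1$ in the obvious way, and this retraction is a cellular, in fact a simple, homotopy equivalence relative to $\Sigma_1$ (collapsing the product cells $\sigma\times[0,1]$ pairs off cells in complementary dimensions with incidence number $\pm 1$, so the associated torsion is trivial — one can invoke Lemma \ref{lem_T} here, or simply the classical fact that mapping cylinder collapses are simple). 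Performing this collapse on the $M_h$-part of $X'$ identifies $\Sigma_0$ with $\Sigma_1$ via the cellular map $h$, and since $h$ is a cellular approximation to the canonical identification $\Sigma_0\to\Sigma_1$, the resulting space is (simple homotopy equivalent to) the closed manifold $X$ obtained by regluing $Y$ along $\Sigma$.

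Concretely, the key steps in order are: (1) verify that the cell structure on $X'$ described in the text is indeed a legitimate finite CW structure, so that $\tau_\rho(X')$ is defined — here one uses the hypothesis $H_*^\rho(X;\mathcal{K}_\theta((t^l)))=0$ together with the homotopy equivalence $X'\simeq X$ to get $H_*^\rho(X';\mathcal{K}_\theta((t^l)))=0$; (2) exhibit the elementary collapse of $M_h$ onto $\Sigma_1\subset X'$ as a sequence of elementary collapses of cell pairs $(\widehat{\mathcal{D}_0(p)},\,\text{its }[0,1]\text{-extension})$ and $(\widehat{\mathcal{F}(\sigma)},\,\text{its }[0,1]\text{-extension})$ and $(\sigma,\text{—})$, each of which is a simple homotopy equivalence; (3) identify the collapsed space with $X$ up to a simple homotopy equivalence, using that $h\simeq\mathrm{id}_\Sigma$ through cellular maps and that homotopic attaching maps yield simple-homotopy-equivalent adjunction spaces; (4) conclude $\tau_\rho(X')=\tau_\rho(X)$ by simple-homotopy invariance of Reidemeister torsion with the coefficient system $\rho\colon\pi_1X\to\mathcal{K}_\theta((t^l))^\times$, noting that the identification $\pi_1X'\cong\pi_1X$ chosen in the text is compatible with all these moves.

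The main obstacle I expect is bookkeeping in step (2)–(3): one must check that the cellular map $h$ can be chosen so that the collapses of the $[0,1]$-extensions of the cells $\widehat{\mathcal{D}_0(p)}$ and $\widehat{\mathcal{F}(\sigma)}$ can be performed in an order compatible with the face relations in $T_Y$ (so that at each stage one is genuinely collapsing a free face), and that the gluing data on $Y$ is preserved. This is essentially the statement that a mapping cylinder built cellwise over a triangulated manifold collapses to its target through elementary collapses; it is standard but requires care because the cells $\overline{\mathcal{D}_0(p)}$ and $\overline{\mathcal{F}(\sigma)}$ are not simplices and their compactifications involve broken flow lines. An alternative, slightly cleaner route that avoids enumerating collapses: use the multiplicativity of torsion (Lemma \ref{lem_M}) applied to the decomposition $X' = Y\cup_{\Sigma_0\sqcup\Sigma_1} M_h$, together with the fact that the inclusion $\Sigma_1\hookrightarrow M_h$ induces an isomorphism on twisted chain complexes up to a chain homotopy equivalence of zero torsion, to reduce $\tau_\rho(X')$ directly to $\tau_\rho(Y\cup_h \Sigma)= \tau_\rho(X)$; I would present whichever of these the subsequent sections make most convenient.
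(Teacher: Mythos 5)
The paper does not argue via simple homotopy equivalence of $X'$ with $X$; it instead applies Milnor's multiplicativity formula (Lemma~\ref{lem_M}) twice. One writes two Mayer--Vietoris short exact sequences of twisted chain complexes, one for $X = Y \cup_{\Sigma_0\sqcup\Sigma_1}(\Sigma\times[0,1])$ and one for $X' = Y \cup_{\Sigma_0\sqcup\Sigma_1}M_h$. The natural surjection $\Sigma\times[0,1]\to M_h$ is a quasi-isomorphism and induces an isomorphism of the two long exact homology sequences, so after cancellation the ratio $\tau_\rho(X)/\tau_\rho(X')$ reduces to the ratio of the (basis-relative) torsions of $C_*(\widetilde{\Sigma}\times[0,1])$ and $C_*(\widetilde{M}_h)$. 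A second pair of short exact sequences converts those to the relative torsions of $(\widetilde{\Sigma}\times[0,1],\widetilde{\Sigma}\times 1)$ and $(\widetilde{M}_h,\widetilde{\Sigma}_1)$, each of which is checked directly to be trivial. Your alternative route, using Lemma~\ref{lem_M} and the collapsibility of the cylinder piece, is thus close in spirit to what the paper actually does, and would be the cleaner way to write it up.

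Your primary route, however, has a real gap in step (3). After collapsing $M_h$ onto $\Sigma_1$, the space you obtain is $Y_h := Y/(x\sim h(x))$, where $x\in\Sigma_0$ is identified with $h(x)\in\Sigma_1$. You then assert $Y_h\simeq_s X$, citing that ``homotopic attaching maps yield simple-homotopy-equivalent adjunction spaces.'' That principle applies to attaching cells or a CW complex to a fixed base along homotopic maps; here instead two disjoint boundary subcomplexes $\Sigma_0,\Sigma_1$ \emph{of the same space} $Y$ are being identified via $h$ versus via the canonical map $\iota$, and there is no fixed base to which the standard lemma applies. In fact $Y_h\simeq_s X$ is not a subsidiary fact one can invoke: since $X'\simeq_s Y_h$ by the cylinder collapse and $X\simeq_s Y\cup_{\Sigma_0\sqcup\Sigma_1}(\Sigma\times[0,1])$ by expansion of a collar, the assertion $Y_h\simeq_s X$ is \emph{equivalent} to the statement $X'\simeq_s X$ you set out to prove, so the argument is circular as written. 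Moreover, even the preliminary step ``collapse $M_h$ inside $X'$ by elementary collapses'' is not immediate in the cell structure the paper actually uses: in $X'$ the cells of $T_0$ and the product cells $\sigma\times[0,1]$ are not retained as separate cells but are absorbed into the $\widehat{\mathcal{D}_0(p)}$ and $\widehat{\mathcal{F}(\sigma)}$ cells, so one would first have to pass to a finer CW structure and then compare torsions across the subdivision. The paper's Mayer--Vietoris computation sidesteps all of this, and establishes precisely the equality of Reidemeister torsions (after applying $\rho$), which is weaker than but sufficient for the lemma and avoids having to prove a genuine simple homotopy equivalence.
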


\begin{proof}
In all of the calculations below, we implicitly tensor the chain complexes with the skew field $\mathcal{K}_{\theta}((t^l))$, and brackets mean that they are in $\mathcal{K}_{\theta}((t^l))_{ab}^{\times} / \pm \rho(\pi_1 X)$.

We regard $X$ as the union of $Y$ and $\Sigma \times [0, 1]$ along $\Sigma_0 \sqcup \Sigma_1$, then we have short exact sequences
\begin{align*}
&0 \to C_*(\widetilde{\Sigma}_0) \oplus C_*(\widetilde{\Sigma}_1) \to C_*(\widetilde{\Sigma} \times [0, 1]) \oplus C_*(\widetilde{Y}) \to C_*(\widetilde{X}) \to 0, \\
&0 \to C_*(\widetilde{\Sigma}_0) \oplus C_*(\widetilde{\Sigma}_1) \to C_*(\widetilde{M_{h}}) \oplus C_*(\widetilde{Y}) \to C_*(\widetilde{X}) \to 0.
\end{align*}

The natural surjection $\Sigma \times [0,1] \to M_h$ induces an isomorphism between $H_*^{\rho}(\Sigma \times [0,1]; \mathcal{K}_{\theta}((t^l)))$ and $H_*^{\rho}(M_h; \mathcal{K}_{\theta}((t^l)))$, and there is an isomorphism between the long exact sequences in homology for the above sequences.
Let $\boldsymbol{c}$ and $\boldsymbol{c}'$ be the bases of $C_*(\widetilde{\Sigma} \times [0, 1])$ and $C_*(\widetilde{M_{h}})$ which are obtained by $T_0$ and the product cell structure from $T_1$.
We pick bases $\boldsymbol{h}$ and $\boldsymbol{h}'$ of $H_*^{\rho}(\Sigma \times [0,1]; \mathcal{K}_{\theta}((t^l)))$ and $H_*^{\rho}(M_h; \mathcal{K}_{\theta}((t^l)))$ such that the isomorphism maps $\boldsymbol{h}$ to $\boldsymbol{h}'$.
Then from Lemma \ref{lem_M} we obtain
\begin{equation} \label{eq_p1}
\frac{\tau_{\rho}(X)}{\tau_{\rho}(X')} = \frac{[\tau(C_*(\widetilde{\Sigma} \times [0, 1]), \boldsymbol{c}, \boldsymbol{h})]}{[\tau(C_*(\widetilde{M}_h), \boldsymbol{c}', \boldsymbol{h}')]}.
\end{equation}

We have short exact sequences
\begin{align*}
&0 \to C_*(\widetilde{\Sigma} \times 1) \to C_*(\widetilde{\Sigma} \times [0, 1]) \to C_*(\widetilde{\Sigma} \times [0, 1], \widetilde{\Sigma} \times 1) \to 0, \\
&0 \to C_*(\widetilde{\Sigma}_1) \to C_*(\widetilde{M}_h) \to C_*(\widetilde{M}_h, \widetilde{\Sigma}_1) \to 0.
\end{align*}
The surjection $\Sigma \times [0,1] \to M_h$ also induces an isomorphism between the long exact sequences in homology for the above sequences.
Let $\boldsymbol{d}$ and $\boldsymbol{d}'$ be bases of $C_*(\widetilde{\Sigma} \times [0, 1], \widetilde{\Sigma} \times 1)$ and $C_*(\widetilde{M}_h, \widetilde{\Sigma}_1)$ induced by $\boldsymbol{c}$ and $\boldsymbol{c}'$.
Then again from Lemma \ref{lem_M} we obtain
\begin{equation} \label{eq_p2}
\frac{[\tau(C_*(\widetilde{\Sigma} \times [0, 1]), \boldsymbol{c}, \boldsymbol{h})]}{[\tau(C_*(\widetilde{M}_h), \boldsymbol{c}', \boldsymbol{h}')]} = \frac{[\tau(C_*(\widetilde{\Sigma} \times [0, 1], \widetilde{\Sigma} \times 1), \boldsymbol{d})]}{[\tau(C_*(\widetilde{M}_h, \widetilde{\Sigma}_1), \boldsymbol{d}')]}.
\end{equation}

By direct computations we have
\[ [\tau(C_*(\widetilde{\Sigma} \times [0, 1], \widetilde{\Sigma} \times 1), \boldsymbol{d})] = [\tau(C_*(\widetilde{M}_h, \widetilde{\Sigma}_1), \boldsymbol{d}')] = [1]. \]
Now the lemma follows from \eqref{eq_p1}, \eqref{eq_p2} and these equalities.
\end{proof}

\subsection{Computation of the torsion}  \label{subsec_4_2}

We decompose
\[ C_i(\widetilde{X}') \otimes_{\Z[\pi_1 X']} \mathcal{K}_{\theta}((t^l)) = D_i \oplus E_i \oplus F_i,  \]
where $D_i$, $E_i$ and $F_i$ are generated by elements of the form $\widehat{\mathcal{D}_0(p)}$, $\sigma$ and $\widehat{\mathcal{F}(\sigma)}$ for a critical point $p$ of $f_0$ and $\sigma \in T_1$ respectively.
There are natural identifications $D_i \cong C_i^{Nov}(f) \otimes_{\Lambda_f} \mathcal{K}_{\theta}((t^l))$ and $F_i \cong E_{i-1}$.
Then the matrix for the differential $\partial_i$ can be written as

\begin{align*}
& \quad D_i \hspace{16pt} E_i \hspace{24pt} F_i \\
\partial_i ~=~ 
\begin{matrix}
D_{i-1} \\
E_{i-1} \\
F_{i-1}
\end{matrix}
\quad &
\begin{pmatrix}
\boldsymbol{N}_i & 0 & \boldsymbol{W}_i \\
- \boldsymbol{M}_i & \partial_i^{\Sigma} & I - \phi_{i-1} \\
0 & 0 & - \partial_i^{\Sigma}
\end{pmatrix},
\end{align*}
where $\partial_i^{\Sigma}$ is the differential on $C_*(\widetilde{\Sigma}) \otimes_{\Z[\pi_1 \Sigma]} \mathcal{K}_{\theta}((t^l))$ and $\phi_{i-1}$ can be interpreted as the return map of the gradient flow in $\widetilde{X}$ after perturbation by $h$.
We set
\[ \boldsymbol{K}_i := \boldsymbol{N}_i + \boldsymbol{W}_i (I - \phi_{i-1})^{-1} \boldsymbol{M}_i \colon D_i \to D_{i-1}. \]

Since $C_*(\widetilde{X}) \otimes_{\Z[\pi_1 X]} \mathcal{K}_{\theta}((t^l))$ is acyclic, the Novikov complex $(D_*, \partial_*^f)$ is also acyclic by Theorem \ref{thm_P}, and we can choose a decomposition $D_i = D_i' \oplus D_i''$ such that $D_i'$ and $D_i''$ are spanned by lifts of the critical points of $f$ and $\partial_f$ induces an isomorphism $D_i' \to D_{i-1}''$.
We denote by $K_i \colon D_i' \to D_{i-1}''$ the induced map by $\boldsymbol{K}_i$.

\begin{lem} \label{lem_p2}
Under the assumptions of Theorem \ref{thm_M}, if $K_i$ is non-singular for each $i$, then
\[ \tau_{\rho}(X') =  \prod_{i=1}^d [\det(I - \phi_{i-1}) \det K_i]^{(-1)^i}. \]
\end{lem}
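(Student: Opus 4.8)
The goal is to extract $\tau_\rho(X')$ from the block matrix presentation of $\partial_*$ on $C_*(\widetilde{X}')\otimes\mathcal{K}_\theta((t^l)) = D_*\oplus E_*\oplus F_*$ by a sequence of changes of basis that does not affect the torsion, reducing to an explicit alternating product of Dieudonn\'e determinants. The first move is to clear the entries involving $E_*$ and $F_*$: using that $F_i\cong E_{i-1}$ with the block $-\partial_i^\Sigma\colon F_i\to F_{i-1}$ identified with the differential $\partial_i^\Sigma$ on $C_*(\widetilde\Sigma)\otimes\mathcal{K}_\theta((t^l))$ (up to sign and reindexing), I want to show that the subcomplex $E_*\oplus F_*$, with the differential given by the lower-right $2\times 2$ block $\left(\begin{smallmatrix}\partial^\Sigma & I-\phi\\ 0 & -\partial^\Sigma\end{smallmatrix}\right)$, is acyclic. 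Indeed this is (up to sign conventions) the algebraic mapping cone of the chain map $I-\phi\colon C_*(\widetilde\Sigma)\otimes\mathcal{K}_\theta((t^l))\to C_*(\widetilde\Sigma)\otimes\mathcal{K}_\theta((t^l))$, and $I-\phi$ is a chain \emph{isomorphism} over $\mathcal{K}_\theta((t^l))$ because $\phi$ strictly raises the $t$-filtration (it is the return map of the gradient flow, which pushes across the level set $\Sigma$ exactly once), so $I-\phi$ is invertible with inverse $\sum_{n\ge 0}\phi^n$ a well-defined element over the Novikov-type ring. Hence the mapping cone is acyclic and its torsion is $\pm\prod_i[\det(I-\phi_{i-1})]^{(-1)^i}$ by Lemma \ref{lem_T} (taking the evident splitting of the cone into $F_*$ mapping isomorphically onto a shift of $E_*$, after absorbing the $\partial^\Sigma$'s).

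**Reducing to $K_*$.** Next I want to eliminate the off-diagonal blocks $0,\boldsymbol{W}_i$ in the $D$-row and $-\boldsymbol{M}_i, I-\phi_{i-1}$ in the $E$-row so as to split $C_*(\widetilde{X}')\otimes\mathcal{K}_\theta((t^l))$, up to torsion-preserving elementary operations, as a direct sum of the subcomplex $E_*\oplus F_*$ and a complex on $D_*$ whose differential is exactly $\boldsymbol{K}_i = \boldsymbol{N}_i + \boldsymbol{W}_i(I-\phi_{i-1})^{-1}\boldsymbol{M}_i$. The idea: right-multiplying the $F_i$-columns by $(I-\phi_{i-1})^{-1}$ (an invertible change of basis on $F_i$, legitimate because $F_i\cong E_{i-1}$ and $I-\phi$ acts invertibly) turns the $(E,F)$-entry into $I$; then column operations adding left-multiples of the $F_i$-columns to the $D_i$-columns kill $\boldsymbol{W}_i$ and produce exactly $\boldsymbol{N}_i+\boldsymbol{W}_i(I-\phi_{i-1})^{-1}\boldsymbol{M}_i$ in the $(D_{i-1},D_i)$-slot while replacing $-\boldsymbol{M}_i$ by $0$ in the $(E_{i-1},D_i)$-slot. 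One has to be careful that these operations are \emph{changes of basis on the chain modules} (row/column operations that do not mix different degrees), so that they leave $\tau$ unchanged; the analogue of Turaev's argument, i.e.\ the multiplicativity of $\tau$ under such operations, applies verbatim over a skew field using the Dieudonn\'e determinant. After this reduction, $(D_*,\boldsymbol{K}_*)$ is a chain complex over $\mathcal{K}_\theta((t^l))$; since the full complex and the $E\oplus F$ summand are acyclic, so is $(D_*,\boldsymbol{K}_*)$, and by Lemma \ref{lem_M} $\tau_\rho(X')$ equals the product of $\tau(D_*,\boldsymbol{K}_*)$ and $\pm\prod_i[\det(I-\phi_{i-1})]^{(-1)^i}$.

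**Computing $\tau(D_*,\boldsymbol{K}_*)$.** Finally, I apply Lemma \ref{lem_T} to $(D_*,\boldsymbol{K}_*)$ with the decomposition $D_i = D_i'\oplus D_i''$ furnished in the text (from acyclicity of the Novikov complex via Theorem \ref{thm_P}): $D_i'$ and $D_i''$ are spanned by subbases of the preferred basis $\langle\tilde p\otimes 1\rangle_p$, and the projection $\mathrm{pr}_{D_{i-1}''}\circ\boldsymbol{K}_i|_{D_i'} = K_i\colon D_i'\to D_{i-1}''$ is by hypothesis an isomorphism for each $i$. Lemma \ref{lem_T} then gives $\tau(D_*,\boldsymbol{K}_*) = \pm\prod_{i}(\det K_i)^{(-1)^i}$. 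Combining with the previous paragraph and reindexing the $(-1)^i$'s so the two products match (the shift $F_i\cong E_{i-1}$ accounts for turning $(-1)^{i-1}$ contributions of the $E\oplus F$ part into $(-1)^i$), and noting $d = \dim X$ bounds the range of indices with $i\ge 1$ (the $i=0$ term is trivial since there are no index $-1$ critical points and $\phi_{-1}$ is absent), we obtain
\[ \tau_\rho(X') = \prod_{i=1}^d[\det(I-\phi_{i-1})\det K_i]^{(-1)^i} \]
in $\mathcal{K}_\theta((t^l))_{ab}^\times/\pm\rho(\pi_1 X)$.

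**Main obstacle.** The delicate point is the second paragraph: verifying that the column/row operations clearing $\boldsymbol{W}_i$ and $\boldsymbol{M}_i$ really are torsion-preserving in the non-commutative setting and produce precisely $\boldsymbol{K}_i$ in the correct (left vs.\ right) order, given that $\phi$ raises filtration — one must check that $(I-\phi_{i-1})^{-1}=\sum_{n\ge0}\phi_{i-1}^n$ converges in $\mathcal{K}_\theta((t^l))$-coefficients and that the resulting matrix $\boldsymbol{W}_i(I-\phi_{i-1})^{-1}\boldsymbol{M}_i$ lies in the right entry with the multiplication in the order displayed. Keeping track of signs and of the index shift $F_i\cong E_{i-1}$ while assembling the two alternating products into the single product over $i=1,\dots,d$ is the other place where care is needed, but it is bookkeeping rather than a genuine difficulty.
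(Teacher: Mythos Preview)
Your strategy---factor $\tau_\rho(X')$ as the torsion of the cone $E_*\oplus F_*$ times the torsion of $(D_*,\boldsymbol{K}_*)$ via a change of basis---does not go through as written. The operations in your second paragraph are changes of basis on $C_i$, and such a change affects \emph{both} $\partial_i$ (on its source) and $\partial_{i+1}$ (on its target); you only track the former. Concretely, replacing each $D_i$-generator $d$ by $d+(I-\phi_{i-1})^{-1}\boldsymbol{M}_i(d)\in D_i\oplus F_i$ does kill the $(E_{i-1},D_i)$-entry and put $\boldsymbol{K}_i$ in the $(D_{i-1},D_i)$-slot, but it simultaneously creates a nonzero entry $-\partial_i^{\Sigma}(I-\phi_{i-1})^{-1}\boldsymbol{M}_i$ in the $(F_{i-1},D_i)$-slot, and it does \emph{not} touch $\boldsymbol{W}_i$ in the $(D_{i-1},F_i)$-slot (that entry lives in the $F_i$-column, not the $D_i$-column). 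A row operation clearing $\boldsymbol{W}_i$ instead introduces a nonzero $(D_{i-1},E_i)$-entry. So no sequence of such elementary moves makes the differential block-diagonal, and there is no short exact sequence of based complexes whose pieces are the cone and $(D_*,\boldsymbol{K}_*)$; Lemma~\ref{lem_M} is not available here. Note also that $E_*\oplus F_*$ with the lower $2\times 2$ block is neither a subcomplex (because $\boldsymbol{W}_i\neq 0$) nor a quotient (because $\boldsymbol{M}_i\neq 0$) of $C_*(\widetilde{X}')\otimes\mathcal{K}_\theta((t^l))$.

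The paper avoids all of this by a single direct application of Lemma~\ref{lem_T} to the whole complex, with the splitting $C_i'=D_i'\oplus F_i$ and $C_i''=D_i''\oplus E_i$ (both spanned by subbases of the cell basis). The projection $\mathrm{pr}_{C_{i-1}''}\circ\partial_i|_{C_i'}$ is then the $2\times 2$ block
\[
\Omega_i=\begin{pmatrix} N_i & W_i \\ -M_i & I-\phi_{i-1}\end{pmatrix}\colon D_i'\oplus F_i\longrightarrow D_{i-1}''\oplus E_{i-1},
\]
and one elementary row operation (add $W_i(I-\phi_{i-1})^{-1}$ times the bottom block-row to the top) turns it into $\bigl(\begin{smallmatrix}K_i&0\\-M_i&I-\phi_{i-1}\end{smallmatrix}\bigr)$, whence $\Omega_i$ is invertible with $\det\Omega_i=\det(I-\phi_{i-1})\det K_i$. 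The point is that this row operation is performed on the \emph{single matrix} $\Omega_i$ to compute its Dieudonn\'e determinant; it is not a change of basis on the chain complex and no splitting or multiplicativity is invoked.
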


\begin{proof}
We consider the matrix
\begin{align*}
& \quad D_i' \hspace{27pt} F_i \\
\Omega_i ~:=~
\begin{matrix}
D_{i-1}'' \\
E_{i-1}
\end{matrix}
\quad &
\begin{pmatrix}
N_i & W_i \\
- M_i & I - \phi_{i-1}
\end{pmatrix},
\end{align*}
where $M_i \colon D_i' \to E_{i-1}$, $N_i \colon D_i' \to D_{i-1}''$ and $W_i \colon F_i \to D_{i-1}''$ be the induced maps by $\boldsymbol{M}_i$, $\boldsymbol{N}_i$ and $\boldsymbol{W}_i$ respectively.
After elementary row operations we can turn $\Omega_i$ into the matrix
\[
\begin{pmatrix}
K_i & 0 \\
- M_i & I - \phi_{i-1}
\end{pmatrix}.
\]
Since $K_i$ is nonsingular, $\Omega_i$ is also nonsingular and
\[ \det \Omega_i = \det(I - \phi_{i-1}) \det K_i. \]
By Lemma \ref{lem_T} we have
\[
\tau_{\rho}(X') = \prod_{i=1}^{d} [\det \Omega_i]^{(-1)^i},
\]
which proves the lemma.
\end{proof}

For a positive integer $k$ and $x, y \in \overline{\mathcal{K}_{\theta}((t^l))}_{ab}^{\times} / \pm \rho(\pi_1 X)$, we write
\[ x \sim_k y \]
if there exist representatives $\sum_{i=0}^{\infty} a_i t^{l i}, \sum_{i=0}^{\infty} b_i t^{l i} \in \mathcal{K}_{\theta}((t))^{\times}$ of $x, y$ respectively such that $a_0 b_0 \neq 0$ and $a_i = b_i$ for $i = 0, 1, \dots, k$.
Note that $x = y$ if and only if for any positive integer $k$, $x \sim_k y$.

\begin{lem} \label{lem_p3}
For any positive integer $k$, if we choose $T_1$ sufficiently fine and $h$ sufficiently close to the identity, then 
\[ \prod_{i=1}^d [\det(I - \phi_{i-1})]^{(-1)^i} \sim_k [\rho_*(\zeta_f)]. \]
\end{lem}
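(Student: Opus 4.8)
The plan is to identify $\det(I-\phi_{i-1})$ with a zeta-type count of closed orbits of the gradient flow, using the standard expansion of a determinant as a sum over products of matrix entries indexed by permutations, and then to interpret each such term dynamically. First I would fix the positive integer $k$ and choose $T_1$ fine enough and $h$ close enough to the identity so that the return map $\phi_{i-1}$, viewed as a matrix over $\mathcal{K}_\theta((t^l))$, has the property that $I - \phi_{i-1}$ is invertible and, more importantly, that modulo the ideal of elements of degree $> $ (some bound depending on $k$) the entries of $\phi_{i-1}$ record exactly the gradient flow lines from cells of $T_1$ back to cells of $T_1$ that wind around $S^1$ by a controlled amount. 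Because the closed orbits of period $1$ with $\deg f_*([o]) \le $ that bound form a finite set, for $T_1$ sufficiently fine every such orbit meets the chosen level set $\Sigma$ transversely in points lying in distinct top cells of $T_1$, and $h$ close to the identity guarantees that the perturbation does not create or destroy these low-degree orbits. This reduces the statement to an identity in the truncation $\mathcal{K}_\theta((t^l)) / (\text{high degree})$, which is where the relation $\sim_k$ lives.

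Next I would carry out the algebraic computation of $\prod_{i=1}^d [\det(I-\phi_{i-1})]^{(-1)^i}$. Using $\det(I-\phi) = \exp(\tr \log(I - \phi)) = \exp\left(-\sum_{j=1}^\infty \tr(\phi^j)/j\right)$ — valid here because $\phi_{i-1}$ has entries of positive degree after the perturbation, so the series converges in the Novikov topology and $\det$ can be computed by the Dieudonné determinant via this formula up to commutators — the alternating product becomes $\exp\left(\sum_{j=1}^\infty \frac{1}{j} \sum_{i}(-1)^{i}(-\tr(\phi_{i-1}^j))\right)$. The trace $\tr(\phi_{i-1}^j)$, expanded over closed paths in the cell complex of $\Sigma$, counts (with the appropriate group element recording the homotopy class of the orbit, read off via the paths $\sigma_o$) the $j$-fold iterates of closed orbits passing through cells of dimension $i-1$; the alternating sum over $i$ of these, weighted by the index data, collects into the Lefschetz sign $\epsilon(o^j) = (-1)^{i_+(o)+(j+1)i_-(o)}$ of each orbit, exactly matching the exponential formula for $\zeta_f$ derived at the end of Section \ref{sec_3_1}, namely $\zeta_f = \prod_{[o],\,p(o)=1}\left[\exp\left(\sum_{j=1}^\infty \frac{\epsilon(o^j)}{j}[\sigma_o o^j \bar\sigma_o]\right)\right]$. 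Applying $\rho_*$ and reducing modulo the high-degree ideal yields $\prod_i[\det(I-\phi_{i-1})]^{(-1)^i} \sim_k [\rho_*(\zeta_f)]$.

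The main obstacle is the non-commutativity: the Dieudonné determinant only lives in the abelianization $\mathcal{K}_\theta((t^l))_{ab}^\times$, so the identity $\det(I-\phi) = \exp(-\sum \tr(\phi^j)/j)$ is not literally available, and even the trace of $\phi^j$ is only well-defined up to cyclic permutation of group elements. I expect the heart of the argument to be a careful bookkeeping lemma showing that, after passing to $\overline{\mathcal{K}_\theta((t^l))}_{ab}^\times$ and truncating, the relevant sums of products of matrix entries do assemble into a conjugation-invariant count indexed by equivalence classes of closed orbits, so that the choice of paths $\sigma_o$ and of representatives in $\pi_1 X$ washes out precisely as it does in the definition of $\zeta_f$. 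A secondary technical point is controlling the perturbation $\phi_{i-1}$ versus the honest return map of $\nabla f$ in $\widetilde X$: one must verify that as $T_1$ refines and $h \to \mathrm{id}$ the low-degree part of $\phi_{i-1}$ stabilizes to the geometric return map, which is where the compactness of $\overline{\mathcal{D}_0(p)}$ and $\overline{\mathcal{F}(\sigma)}$ and the nondegeneracy hypothesis on closed orbits enter. Once these two points are settled, the remaining manipulations are the routine $\exp/\log$ formal identities already used in Section \ref{sec_3_1}.
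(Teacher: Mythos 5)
Your overall geometric setup (choose $T_1$ fine, $h$ close to the identity, observe that the finitely many low-degree period-one orbits are then captured faithfully and no spurious ones are created) matches the paper's, and you correctly state the target formula as the exponential expression for $\zeta_f$ from Section~\ref{sec_3_1}. But the algebraic core of your argument has a genuine gap exactly at the point you flag as the ``main obstacle.'' The identity $\det(I-\phi)=\exp\bigl(-\sum_{j\ge 1}\tr(\phi^j)/j\bigr)$ is simply false for the Dieudonn\'e determinant over a skew field: the matrix entries of $\phi_{i-1}$ are noncommuting elements of $\mathcal{K}_\theta((t^l))$, ``$\tr$'' is only defined up to cyclic permutation (which is not the same as passing to the abelianization $\mathcal{K}_\theta((t^l))_{ab}^{\times}$), and the permutation-expansion of the Dieudonn\'e determinant does not organize into trace powers. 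Your proposal to resolve this by a ``careful bookkeeping lemma'' defers the entire difficulty; you have not given an argument that such a lemma exists, and it is not clear that one does in the generality you propose.

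The paper avoids a global $\exp/\log$ entirely. It first reduces (Lemma~\ref{lem_p4}, a Gaussian-elimination argument for Dieudonn\'e determinants over the Novikov field) the computation of $[\det(I-\phi_i)]$ modulo $\sim_k$ to a product over the finitely many orbits in $\mathcal I_k$ of \emph{local} determinants $[\det(I-\phi_{[o],i})]$, where $\phi_{[o],i}$ is supported on a small neighborhood $N_{[o]}\subset T_1$ of the orbit. Condition (iii) on the covering $\{N_{x_j}\}$ is what feeds Lemma~\ref{lem_p4}: it guarantees that any ``crossing'' cycle of length at most $k$ stays inside some $N_{[o]}$, so cross-terms between different orbits are killed up to $\sim_k$. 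Then, and only then, does the paper invoke $\exp/\log$ and the Lefschetz fixed point machinery -- but this is legitimate because the entries of the \emph{local} return matrix $\phi'_{[o],i}$ all lie in the commutative subring $\rho(\Z[[\sigma_o o\bar\sigma_o]])\subset\mathcal{K}_\theta((t^l))$, generated by the single group element recording the orbit's homotopy class. In short: the paper localizes first (a noncommutative Gaussian-elimination lemma), then exponentiates in a commutative subring, whereas you exponentiate globally and hope to sort out noncommutativity afterward. The localization step (Lemma~\ref{lem_p4}) is the missing idea in your proposal, and without something like it the argument does not go through.
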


We prepare some notation and a lemma for the proof.

Let $\varphi \colon \Sigma \setminus \sqcup_{p} \mathcal{A}_0(p) \to \Sigma \setminus \sqcup_{p} \mathcal{D}_0(p)$ be the diffeomorphism defined by the downward gradient flows and let $H \colon \Sigma \times [0, 1] \to \Sigma$ be the homotopy from $id$ to $h$.
We can consider the $i$ times iterate maps $\varphi^i$ and $(H(\cdot, t) \circ \varphi)^i$ for $t \in [0,1]$ which are partially defined.
A natural compactification $\overline{\Gamma}_t^i$ of the graph $\Gamma_t^i \in \Sigma \times \Sigma$ of $(H(\cdot, t) \circ \varphi)^i$ is defined by attaching pairs $(x, H(y, t))$, where $x$ is the starting point and $y$ is the end point of a broken flow line of $- \nabla f_0$.
(See \cite{HL1}, \cite{HL2} for more details.)

It is known that there exists a positive integer $N$ such that if the simplexes in $T_1$ are all contained in balls of radius $\epsilon$, then $H$ can be chosen so that the distance between $x$ and $H(x, t)$ is $< N \epsilon$ for all $x \in \Sigma$ and $t \in [0, 1]$.
(See for instance \cite{Sp}.)
Since the set of fixed points of $\varphi^i$ lies in the interior of $\overline{\Gamma}_0^i$ under the diagonal map $\Sigma \to \Sigma \times \Sigma$ and is compact from the nondegenerate assumption, it follows for any positive integer $k$ that we can choose $\epsilon$ so that $\overline{\Gamma}_t^i$ does not cross the diagonal in $\Sigma \times \Sigma$ for all $i \leq k$.

\begin{lem} \label{lem_p4}
Let $k$ be a positive integer and suppose that $l = 1$.
Let $(a_{i, j})$ be the $n$-dimensional matrix over $\mathcal{K}_{\theta}((t))$ such that $a_{i, j} = c_{i, j} t$, where $c_{i, j} \in \mathcal{K}$.
If $a_{1, i_1} a_{i_1, i_2} \dots a_{i_{j-1}, 1} = 0$ for any sequence $i_1, i_2, \dots, i_{j-1}$ with $j \leq k$, then
\[ [\det(\delta_{i, j} - a_{i, j})_{1 \leq i, j \leq n}] \sim_k [\det(\delta_{i, j} - a_{i, j})_{2 \leq i, j \leq n}], \]
where $\delta_{i, j}$ is Kronecker's delta.
\end{lem}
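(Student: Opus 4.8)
The plan is to reduce the Dieudonné determinant of the $n\times n$ matrix $I-(a_{i,j})$ to that of the $(n-1)\times(n-1)$ lower block by performing a cofactor-type elimination on the first row and first column, and then to control the ``error term'' modulo $\sim_k$ using the hypothesis that every cyclic word through the index $1$ of length $\le k$ in the $a_{i,j}$ vanishes. Write $A=(a_{i,j})_{1\le i,j\le n}$, $A'=(a_{i,j})_{2\le i,j\le n}$, and $B=I-A$, $B'=I-A'$. The key observation is that since every $a_{i,j}=c_{i,j}t$ lies in the ideal $t\mathcal{K}_\theta[[t]]$ (positive degree), the matrix $B$ is invertible over $\mathcal{K}_\theta((t))$, and more precisely $B^{-1}=\sum_{m\ge0}A^m$, a well-defined element of $M_n(\mathcal{K}_\theta[[t]])$ because each entry of $A^m$ has degree $\ge m$.

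First I would use elementary row operations to clear the first column below the $(1,1)$ entry. Since $b_{1,1}=1-c_{1,1}t$ is a unit in $\mathcal{K}_\theta[[t]]$, for each $i\ge2$ we add $b_{i,1}b_{1,1}^{-1}$ times the first row to the $i$-th row; this does not change $\det B$. The resulting matrix has the block form $\begin{pmatrix} b_{1,1} & * \\ 0 & B'' \end{pmatrix}$ with $B''=(b_{i,j}-b_{i,1}b_{1,1}^{-1}b_{1,j})_{2\le i,j\le n}$, whence $[\det B]=[b_{1,1}][\det B'']=[\det B'']$, the last equality because $b_{1,1}\equiv1$ modulo $t$ and so contributes trivially in degrees $0$ through $k$ (and in fact $[b_{1,1}\cdot x]\sim_k[x]$ for any $x$ since multiplication by $1+O(t)$ does not alter coefficients up to degree $k$ of a series whose degree-$0$ term is a unit — strictly one should phrase $\sim_k$ as an honest relation on $\mathcal{K}_\theta((t))^\times_{ab}$ and check it is compatible with products, which is routine). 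So it suffices to show $[\det B'']\sim_k[\det B']$.

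The heart of the argument is then to compare $B''$ with $B'$. We have $B''=B'-v$, where $v=(b_{i,1}b_{1,1}^{-1}b_{1,j})_{2\le i,j\le n}$ is a rank-one correction; note $b_{i,1}=-c_{i,1}t$ and $b_{1,j}=-c_{1,j}t$ for $i,j\ge2$, so each entry of $v$ lies in $t^2\mathcal{K}_\theta[[t]]$ — but that alone is not enough since we need agreement up to degree $k$, not just degree $1$. Instead I would write $\det B''=\det(B'(I-(B')^{-1}v))$ and expand using the multiplicativity of the Dieudonné determinant together with the identity $\det(I-(B')^{-1}v)$. Since $v$ has rank one, write $v=\xi\eta^{T}$ where $\xi=(b_{i,1})_{i\ge2}$, $\eta=(b_{1,1}^{-1}b_{1,j})_{j\ge2}$ are column vectors over $\mathcal{K}_\theta[[t]]$; then (over the commutative-in-the-relevant-sense localization, or by a direct Schur-complement manipulation that is valid here because the scalar $1-\eta^T(B')^{-1}\xi$ is central up to the quotient) one gets $[\det(I-(B')^{-1}v)]=[1-\eta^{T}(B')^{-1}\xi]$. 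Now expand $(B')^{-1}=\sum_{m\ge0}(A')^m$: the quantity $\eta^T(B')^{-1}\xi$ is, up to the unit $b_{1,1}^{-1}$ and signs, exactly the sum over all $m\ge0$ and all index sequences $i_1,\dots,i_{m+1}\in\{2,\dots,n\}$ of $a_{1,i_1}a_{i_1,i_2}\cdots a_{i_m,i_{m+1}}a_{i_{m+1},1}$ — precisely the cyclic words through index $1$. Such a word of ``internal length'' $m$ has $t$-degree $m+2$; by the hypothesis it vanishes whenever $m+2\le k$, i.e. all surviving terms have degree $>k$. Hence $\eta^T(B')^{-1}\xi\in t^{k+1}\mathcal{K}_\theta[[t]]$, so $1-\eta^T(B')^{-1}\xi\equiv1$ modulo $t^{k+1}$, and therefore $[\det B'']=[\det B'][1-\eta^T(B')^{-1}\xi]\sim_k[\det B']$.

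The main obstacle is the non-commutativity: the Schur-complement/rank-one determinant identity $[\det(I-(B')^{-1}\xi\eta^T)]=[1-\eta^T(B')^{-1}\xi]$ is classical over a commutative ring but needs care over a skew field. The correct way to see it is to do the elimination directly rather than quote a formula — run elementary row operations on $B''=B'-\xi\eta^T$, or better, note that the original clearing of the first column already produced $B''$ with the factor $b_{1,1}$ peeled off, and one can symmetrically clear the first row of $B$ to isolate the comparison; tracking coefficients up to degree $k$ throughout, using only that $a_{i,j}\in t\mathcal{K}_\theta[[t]]$, that $A^m$ lands in degree $\ge m$, and the vanishing hypothesis on cyclic words. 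The degree bookkeeping (each factor $a_{i,j}$ contributes one power of $t$, a cyclic word of combinatorial length $j$ through index $1$ contributes degree $j$, and such words vanish for $j\le k$) is exactly what forces every correction term into degree $>k$, which is the content of $\sim_k$.
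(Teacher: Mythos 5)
Your route is genuinely different from the paper's and, once one small lapse is repaired, it works. The paper runs full Gauss--Jordan elimination on $I-A$, writing $b^{(m)}_{i,j}=b^{(m-1)}_{i,j}-b^{(m-1)}_{i,m}\bigl(b^{(m-1)}_{m,m}\bigr)^{-1}b^{(m-1)}_{m,j}$, and then proves by induction on the elimination step $m$ that every nonzero term of $b^{(m)}_{i,j}-\delta_{i,j}$ is a path $a_{i,i_1}\cdots a_{i_{j'-1},j}$ in the entries, and that any such term involving some $a_{i',1}$ already has degree $>k$; modulo $\sim_k$ one may therefore set the whole first column of $A$ to $0$, making $I-A$ block-triangular, which gives the claim. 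You instead take only the first Schur complement, reducing to the $(n-1)\times(n-1)$ matrix $B''=B'-\xi\eta^{T}$ with $\xi_i=a_{i,1}$, $\eta_j=a_{1,j}$, and then identify the discrepancy $[\det B'']\,[\det B']^{-1}=[1-\eta^{T}(B')^{-1}\xi]$ via the rank-one formula; expanding $(B')^{-1}=\sum_m (A')^m$ shows $\eta^{T}(B')^{-1}\xi$ is exactly the generating series of cyclic words through index $1$, which the hypothesis kills through degree $k$. This isolates the ``error'' in a single scalar and is cleaner than the paper's term-by-term bookkeeping; the paper's version, on the other hand, avoids any determinant identity and is purely mechanical row reduction, which is why the author phrases it that way. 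Your worry about the rank-one Dieudonn\'e identity is unnecessary: for column vectors $w,u$ over a skew field one shows $[\det(I+wu^{T})]=[1+u^{T}w]$ by clearing the first column with row operations, and the diagonal that emerges is $\bigl(w_1(1+u^{T}w)w_1^{-1},1,\ldots,1\bigr)$, which equals $[1+u^{T}w]$ in $\mathcal{K}^{\times}_{ab}$; combined with multiplicativity of $\det$ this gives exactly what you use.

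One step in your write-up is, as stated, wrong. You assert that $[b_{1,1}\cdot x]\sim_k [x]$ because $b_{1,1}=1-c_{1,1}t\equiv 1\pmod t$, claiming that multiplication by $1+O(t)$ leaves coefficients through degree $k$ unchanged. That is false: multiplying a unit by $1+ct$ with $c\neq 0$ alters the degree-$1$ coefficient, so $\equiv 1\pmod t$ is not enough; you would need $\equiv 1\pmod{t^{k+1}}$. What saves you is that the hypothesis already applies with $j=1$ and forces $a_{1,1}=0$, hence $b_{1,1}=1$ on the nose and the factor disappears exactly, not merely modulo $\sim_k$. You should state $a_{1,1}=0$ explicitly rather than appeal to the incorrect principle. (Also a trivial sign slip: to clear the first column you subtract, not add, $b_{i,1}b_{1,1}^{-1}$ times row $1$ from row $i$.) With these repairs the argument is correct and self-contained.
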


\begin{proof}
We set
\[ b_{i, j}^{(0)} = \delta_{i, j} - a_{i, j} \]
and inductively define $b_{i, j}^{(m)}$ for $m = 1, \dots, n$ as follows:
\[ b_{i, j}^{(m)} = b_{i, j}^{(m-1)} - b_{i, m}^{(m-1)} (b_{m, m}^{(m-1)})^{-1} b_{m, j}^{(m-1)}. \]
This is an elementary row operation with respect to the $i$th row and $b_{i, j}^{(m)} = 0$ if $i \neq j \leq m$.
We have
\[ [\det (b_{i, j}^{(0)})_{1 \leq i, j \leq n}] = [\det (b_{i, j}^{(n)})_{1 \leq i, j \leq n}] = \left[ \prod_{i=1}^n b_{i, i}^{(n)} \right]. \]

By induction on $m$ we first show the following observation concerning any nonzero term in $b_{i, j}^{(m)} - \delta_{i, j}$: \\
(i) The term has positive degree. \\
(ii) The term has elements $a_{i, i_1}$, $a_{i_1, i_2}$, \dots, $a_{i_{j'-1}, j}$ as factors for a sequence $i_1, i_2, \dots, i_{j'-1}$. \\
(iii) If the term has $a_{i', 1}$ as a factor for some $i'$, then then the degree of the term is $> k$ or we can make such a sequence in (ii) contains $1$.

It is easy to check them for $m = 0$.
We assume them for $m = m'-1$.
Since
\[ (b_{m', m'}^{(m'-1)})^{-1} = 1 + \sum_{i=1}^{\infty} (-1)^i (b_{m', m'}^{(m'-1)} - 1)^i, \]
(i) for $m = m'$ follows from (i) for $m = m'-1$.
By (ii) for $m = m'-1$ we see at once (ii) for $m = m'$. 
We take any nonzero term $c$ in $(b_{m', m'}^{(m'-1)})^{-1}$ which has $a_{i', 1}$ as a factor.
Then there is a nonzero term $c'$ in $b_{m', m'}^{(m'-1)}$ which has $a_{i', 1}$ as a factor such that $c$ has $c'$ as a factor.
If $c'$ has elements $a_{m', i_1}$, $a_{i_1, i_2}$, \dots, $a_{i_{j'-1}, m'}$ as factors for a sequence $i_1, i_2, \dots, i_{j'-1}$ containing $1$, then $j' \geq k$ by the assumption, and $\deg c' > k$.
Hence by (ii) and (iii) for $m = m'-1$, $\deg c \geq \deg c > k$.  
Now we can immediately check (iii) for $m = m'$.

As a consequence of the above argument the degree of any nonzero term in $b_{i, i}^{(n)}$ which has $a_{i', 1}$ as a factor is $> k$, and $\left[ \prod_{i=1}^n b_{i, i}^{(n)} \right]$ is invariant even if we erase such terms.
Therefore in considering the equivalence class we can regard $a_{i, 1}$ as $0$ for all $i$, which deduce the lemma.
\end{proof}

\begin{proof}[Proof of Lemma \ref{lem_p3}]
We only consider the case where $l = 1$.
If $l = 0$, then $\phi_i = 0$ for all $i$ and there is no closed orbit, and so there is nothing to prove.
If $l > 1$, then we can prove it by a similar argument.

We set
\[ \mathcal{I}_k := \{ [o] \in \mathcal{O} ~;~ p(o) = 1, \deg f_*([o]) \leq k \}. \]
Then we see that
\begin{equation} \label{eq_p3}
[\rho_*(\zeta_f)] \sim_k \prod_{[o] \in \mathcal{I}_k} [1 - (-1)^{i_-(o)} \rho([\sigma_o o \bar{\sigma}_o])]^{(-1)^{i_+(o) + i_-(o) + 1}}.
\end{equation}

For $[o] \in \mathcal{I}_k$, there is a sequence $x_0, x_1, \dots, x_{\deg f([o])-1}$ of fixed points of $\varphi^{\deg f_*([o])}$ such that $\varphi(x_{j-1}) = x_j$ for $j = 1, 2, \dots, \deg f([o])-1$.
Choosing $T_1$ sufficiently fine and $H$ as above, we can pick mutually disjoint contractible subcomplexes $N_{x_j}$ of $T_1$ satisfying the following conditions: \\
(i) Each $N_{x_j}$ contains all the fixed points of $(H(\cdot, t) \circ \varphi)^{\deg f_*([o])}$ which are close to $x_j$ for all $t \in [0, 1]$. \\
(ii) There is a one-to-one correspondence between cells of $N_{x_{j-1}}$ and ones of $N_{x_j}$ by $h \circ \varphi$ and $h \circ \varphi(N_{\deg f_*([o]) - 1}) \cap N_j = \emptyset$ for $j = 1, 2, \dots, \deg f_*([o]) - 1$. \\
(iii) If there is a sequence $\sigma = \sigma_0, \sigma_1, \dots, \sigma_i = \sigma \in T_1$ so that $\sigma_j \subset h \circ \varphi(\sigma_{j-1})$ for a simplex $\sigma$ not contained in any $N_{x_j}$, then $i > k$.

We denote by $N_{[o]}$ and $N_{[o]}'$ for $[o] \in \mathcal{I}_k$ the union of all $N_{x_j}$ for a fixed point $x_j \in o(S^1)$ and that of all $N_{x_0}$ for such a sequence of $[o]$.
Then we define
\begin{align*}
\phi_{[o], i} &\colon C_i(\widetilde{N}_{[o]}) \otimes \mathcal{K}_{\theta}((t)) \to C_i(\widetilde{N}_{[o]}) \otimes \mathcal{K}_{\theta}((t)) \\
\phi_{[o], i}' &\colon C_i(\widetilde{N}_{[o]}') \otimes \mathcal{K}_{\theta}((t)) \to C_i(\widetilde{N}_{[o]}') \otimes \mathcal{K}_{\theta}((t))
\end{align*}
to be the maps induced by $h \circ \varphi$ and $(h \circ \varphi)^{\deg f_*([o])}$ respectively.

Note that all the entries of $\phi_i$ are monomials.
By condition (iii) we can apply Lemma \ref{lem_p4} repeatedly and obtain
\begin{equation} \label{eq_p4}
[\det(I - \phi_i)] \sim_k \prod_{[o] \in \mathcal{I}_k} [\det(I - \phi_{[o], i})].
\end{equation}

By condition (ii) we can take simplexes $\sigma_j \subset N_{x_j}$ such that $h \circ \varphi(\sigma_{j-1}) = \sigma_j$ for $j = 1, 2, \dots, \deg f([o])-1$.
The matrix of the restriction of $I - \phi_{[o], i}$ on these simplexes has the form
\[
\begin{pmatrix}
1 & 0 & \dots & 0 & m \rho(\gamma_{\deg f_*([o])}) \\
\pm \rho(\gamma_1) & 1 & \dots & 0 & 0 \\
0 & \pm \rho(\gamma_2) & \ddots & \vdots & \vdots \\
\vdots & \vdots & \ddots & 1 & 0 \\
0 & 0 & \dots & \pm \rho(\gamma_{\deg f_*([o]) - 1}) & 1
\end{pmatrix},
\]
where $m \in \Z$ and $\gamma_j \in \pi_1 X$.
Since $\prod_{j = 0}^{\deg f_*([o]) - 1} \gamma_{\deg f_*([o]) - j} = [\sigma_o o \bar{\sigma}_o]$ for a path $\sigma_o$ from the base point of $X$ to $x_0$, the determinant of the matrix is $(1 - (\pm m \rho([\sigma_o o \bar{\sigma}_o]))$ and the coefficient of $\sigma_0$ of $\phi_{[o], i}(\sigma_0 \otimes 1)$ is $\pm m \rho([\sigma_o o \bar{\sigma}_o])$.
According to the above argument, we have
\begin{equation} \label{eq_p5}
[\det(I - \phi_{[o], i})] = [\det(I - \phi_{[o], i}')].
\end{equation}

Since the entries of the matrix $\phi_{[o], i}'$ are all in an abelian ring $\rho(\Z[[\sigma_o o \bar{\sigma}_o]])$,
\begin{align*}
\prod_{i=0}^{d-1} [\det(I - \phi_{[o], i}')] &= \left[ \exp \sum_{j=0}^{\infty} \sum_{i=0}^{d-1}  \frac{(-1)^i}{j} \tr (\phi_{[o], i}')^j \right] \\
&\sim_k \left[ \exp \sum_{j=1}^{\infty} \frac{\mathrm{Fix}((\varphi |_{N_{[o]}'})^{j \deg f_*([o])})}{j} \rho([\sigma_o o \bar{\sigma}_o])^j \right] \\
&= \left[ \exp \sum_{j=1}^{\infty} \frac{\epsilon(o^j)}{j} \rho([\sigma_o o \bar{\sigma}_o])^j \right] \\
&= [1 - (-1)^{i_-(o)} \rho([\sigma_o o \bar{\sigma}_o])]^{(-1)^{i_+(o) + i_-(o) + 1}},
\end{align*}
where $\mathrm{Fix}((\varphi |_{N_{[o]}'})^{j \deg f_*([o])})$ counts fixed points of $(\varphi |_{N_{[o]}'})^{j \deg f_*([o])}$ with sign.
The second equivalence follows from the machinery used to prove the Lefschetz fixed point theorem.
From \eqref{eq_p3}, \eqref{eq_p4}, \eqref{eq_p5} and this the lemma is proved.
\end{proof}

\begin{lem} \label{lem_p5}
For any positive integer $k$, if we choose $T_1$ sufficiently fine and $h$ sufficiently close to the identity, then $K_i$ is non-singular and 
\[ \prod_{i=1}^d [\det K_i]^{(-1)^i} \sim_k [\tau_{\rho}^{Nov}(f)]. \]
\end{lem}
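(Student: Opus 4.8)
The plan is to prove that the Schur-complement matrix $\boldsymbol{K}_i = \boldsymbol{N}_i + \boldsymbol{W}_i(I-\phi_{i-1})^{-1}\boldsymbol{M}_i \colon D_i \to D_{i-1}$ coincides, up to arbitrarily high order in $t^l$, with the Novikov differential $\partial_i^f$ as soon as $T_1$ is chosen fine enough and $h$ close enough to the identity, and then to deduce the lemma from Lemma~\ref{lem_T} applied to the Novikov complex. Throughout, I work with the decomposition $D_i = D_i' \oplus D_i''$ fixed in Lemma~\ref{lem_p2}, so that $K_i = pr_{D_{i-1}''}\circ\boldsymbol{K}_i|_{D_i'}$ and the reference map is $pr_{D_{i-1}''}\circ\partial_i^f|_{D_i'} \colon D_i' \to D_{i-1}''$, which is an isomorphism by construction.

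First I would identify the geometric content of the four blocks. Under $D_i \cong C_i^{Nov}(f)\otimes_{\Lambda_f}\mathcal{K}_\theta((t^l))$, the block $\boldsymbol{N}_i$ counts, with signs and with attached elements of $\pi_1 X$, the negative gradient flow lines of $f$ between critical points that stay inside the fundamental domain $Y$; $\boldsymbol{M}_i$ records the intersection of the compactified descending manifolds $\overline{\mathcal{D}_0(p)}$ with $\Sigma$, pushed into $T_1$ by $h$; $\phi_{i-1}$ is the once-around return map followed by $h$, whose entries are monomials of degree $l$, so $(I-\phi_{i-1})^{-1} = \sum_{n\ge 0}\phi_{i-1}^n$ converges by completeness; and $\boldsymbol{W}_i$ records how the resulting cellular chains meet the compactified ascending manifolds of the critical points of index $i-1$. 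Hence $\boldsymbol{W}_i\phi_{i-1}^n\boldsymbol{M}_i$ counts the broken $-\nabla f$ flow lines from an index-$i$ to an index-$(i-1)$ critical point crossing $\Sigma$ exactly $n+1$ times; summing over $n$ and adding $\boldsymbol{N}_i$ (the no-crossing contribution) shows that $\boldsymbol{K}_i$ would literally equal $\partial_i^f$ were $h$ the identity.

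The core step is the perturbation estimate. Fix a positive integer $k$. Only finitely many flow lines of $-\nabla f$ between critical points of $f$ carry an element $\gamma \in \pi_1 X$ with $\deg f_*(\gamma) \le k$, and by the transversality hypothesis $\mathcal{D}(p)\pitchfork\mathcal{A}(q)$ together with the nondegeneracy of the closed orbits (which also makes $I-\phi_{i-1}$ invertible and controls the iterated return maps) these are cut out transversally. As in the discussion preceding Lemma~\ref{lem_p4} --- using the $N\epsilon$ estimate for $h$ and a compactness argument to handle all crossing numbers $n\le k$ simultaneously --- taking $T_1$ fine enough and $h$ close enough to the identity forces $\boldsymbol{K}_i$ and $\partial_i^f$ to have the same matrix entries in every degree $\le k$. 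It is here that one must carry the element of $\pi_1 X$ attached to each flow line, i.e.\ work throughout in $\widetilde X$, and this is the subtle, genuinely non-commutative point. Restricting and projecting, $K_i$ then agrees with $pr_{D_{i-1}''}\circ\partial_i^f|_{D_i'}$ in degrees $\le k$.

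Finally, since $pr_{D_{i-1}''}\circ\partial_i^f|_{D_i'}$ is an isomorphism, writing it as $K_i\,(I + K_i^{-1}R_i)$ with $R_i$ of large degree shows that $K_i$ is nonsingular and that $[\det K_i] \sim_k [\det(pr_{D_{i-1}''}\circ\partial_i^f|_{D_i'})]$ --- arranging, if necessary, the entries to agree up to a degree slightly larger than $k$ so as to absorb the degree of $\det K_i$; this last passage from matrix agreement to Dieudonn\'e determinant agreement is a routine truncation argument with elementary row operations, in the spirit of Lemma~\ref{lem_p4}. Applying Lemma~\ref{lem_T} to the acyclic complex $D_* = C_*^{Nov}(f)\otimes_{\Lambda_f}\mathcal{K}_\theta((t^l))$ with the chosen decomposition gives $\prod_{i=1}^d[\det(pr_{D_{i-1}''}\circ\partial_i^f|_{D_i'})]^{(-1)^i} = [\tau_\rho^{Nov}(f)]$, and combining the two statements yields $\prod_{i=1}^d[\det K_i]^{(-1)^i}\sim_k[\tau_\rho^{Nov}(f)]$. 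The only real obstacle is the perturbation estimate of the third paragraph; the rest is formal.
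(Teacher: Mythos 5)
Your proposal is correct and follows essentially the same route as the paper's proof: first identify $\boldsymbol{K}_i$ with the Novikov differential geometrically in the unperturbed case $h=\mathrm{id}$ via the decomposition of $\overline{\mathcal{D}(p)}$ into a sum of pieces crossing $\Sigma$ a bounded number of times, then use the $N\epsilon$ estimate and a compactness argument (the paper makes this explicit via the sets $B_t^j(p)$ and the requirement that they avoid $\mathcal{A}(q)\cap\Sigma_1$ for $j<k$) to show the perturbed $\boldsymbol{K}_i$ agrees with $\partial_i^f$ to degree $k$, and finally apply Lemma~\ref{lem_T}. Your added remark about absorbing a degree shift when passing from entrywise agreement to agreement of Dieudonn\'e determinants is a point the paper leaves implicit, and is a reasonable clarification.
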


\begin{proof}
Suppose to begin that $h = id$.
The unstable manifold $\mathcal{D}(p)$ of a critical point $p$ of $f$ has a natural compactification $\overline{\mathcal{D}(p)}$ such as $\mathcal{D}_0(p)$.
The compactification $\overline{\mathcal{D}(p)}$ can be represented as
\[ \overline{\mathcal{D}_0(p)} + \sum_{j=0}^{\infty} \overline{\mathcal{F}(\phi_{i-1}^j \boldsymbol{M}_i(\overline{\mathcal{D}_0(p))}}, \]
where by abuse of notation we also denote by $\mathcal{F}$ the linear extension of $\mathcal{F}$.
So if we identify $D_i$ with $C_i^{Nov}(f) \otimes_{\Lambda_f} \mathcal{K}_{\theta}((t^l))$, then
\begin{align*}
\partial_i^f(\overline{\mathcal{D}_0(p)}) &= pr_{D_{i-1}} \circ \partial_i \left( \overline{\mathcal{D}_0(p)} +\sum_{j=0}^{\infty} \overline{\mathcal{F}(\phi_{i-1}^j \boldsymbol{M}_i(\overline{\mathcal{D}_0(p)}))} \right) \\
&= \boldsymbol{K}_i(\overline{D_0(p)})
\end{align*}
for a critical point $p$ with index $i$.
Hence $\partial_i^f$ induces $K_i \colon D_i' \to D_{i-1}''$, and $K_i$ is nonsingular.
From Lemma \ref{lem_T} we have
\begin{align*}
\prod_{i=1}^d [\det K_i]^{(-1)^i} &= \prod_{i=1}^d [\det pr_{D_i''} \circ \partial_i^f |_{D_i'}]^{(-1)^i} \\
&= [\tau_{\rho}^{Nov}(f)].
\end{align*}

Next we consider the case where $h \neq id$.

Let $pr_1^j, pr_2^j \colon \overline{\Gamma}_t^j \to \Sigma$ be the restriction of the first and second projections of $\Sigma \times \Sigma$.
We define
\[ B_t^j(p) := pr_2(pr_1^{-1}(H(\cdot, t)(\overline{\mathcal{D}_0(p)} \cap \Sigma_0))) \]
for $j = 0, 1, \dots, k-1$, $t \in [0,1]$ and a critical point $p$ of $f$.
Since the set of the intersection points of $B_0^j(p)$ and $\mathcal{A}_0(q) \cap \Sigma_1$ for any critical point $q$ lies the interior of $B_0^j(p)$ and is compact from the transverse condition, it follows that we can choose $T_1$ sufficiently fine and $H$ as above so that $B_t^j(p)$ does not cross $\mathcal{A}(q) \cap \Sigma_1$ for all $j < k$, where we naturally identify $\Sigma_1$ with $\Sigma$.

By a similar argument to that of Lemma \ref{lem_p3} we can check that the image of the hat of $\mathcal{F}(\phi_{i-1}^j \boldsymbol{M}_i (\widehat{\mathcal{D}_0(p)}))$ by $pr_{D_{i-1}} \circ \partial_i$ can be computed from the local intersection numbers of $B_t^j(p)$ and $\mathcal{A}_0(q) \cap \Sigma_1$ and the elements of $\pi_1 X$ determined by the perturbed flows by $h$ from $p$ to $q$, which are invariant on $t$ for $j < k$.
Hence from the computation of the case where $h = id$, we obtain
\[ \partial_i^f(\widehat{\mathcal{D}_0(p)}) \sim_k \boldsymbol{K}_i(\widehat{\mathcal{D}_0(p)}) \]
for all $p$ with index $i$, and $K_i$ is non-singular.
Again from Lemma \ref{lem_T} we analogously see the desired relation.
\end{proof}

From the proofs of Lemma \ref{lem_p3} and \ref{lem_p5}, if we choose appropriate $T_1$ and $H$, then the conclusions of the lemmas simultaneously hold for any positive integer $k$, and so
\[ \prod_{i=1}^d [\det(I - \phi_{i-1}) \det K_i]^{(-1)^i} \sim_k [\rho_*(\zeta_f)] [\tau_{\rho}^{Nov}(f)]. \]
Now we can establish Theorem \ref{thm_M} at once from Lemma \ref{lem_p1} and \ref{lem_p2}. \\

\noindent
\textbf{Acknowledgement.}
The author wishes to express his gratitude to Toshitake Kohno for his encouragement and helpful suggestions.
He is greatly indebted to Andrei Pajitnov for a thorough explanation of the deduction of the theorem from his results and for many stimulating conversations.
He would also like to thank Hiroshi Goda, Takayuki Morifuji, Takuya Sakasai and Yoshikazu Yamaguchi for fruitful discussions and advice.
This research was supported by JSPS Research Fellowships for Young Scientists.


\end{document}